\documentclass[a4paper, 12pt]{article}
\usepackage[utf8]{inputenc}
\usepackage{a4}
\usepackage[T1]{fontenc}
\usepackage[english]{babel}
\usepackage{amssymb,amsmath,amsthm}
\usepackage{enumerate}
\usepackage{graphicx}
\usepackage{epsfig}
\usepackage{comment}
\usepackage{xspace}
\usepackage{float}
\usepackage{multirow}
\usepackage{extarrows}
\parindent0pt\parskip2ex
\pagestyle{plain}
\usepackage{color}
\usepackage{nomencl}
\usepackage{nicefrac}
\makenomenclature
\setcounter{page}{1}
%
%
\newcommand{\N}{\ensuremath{\mathbb{N}}\xspace}

\newcommand{\R}{\ensuremath{\mathbb{R}}\xspace}

\newcommand{\eps}{\epsilon}

\renewcommand{\epsilon}{\varepsilon}


\newcommand{\mres}{\mathbin{\vrule height 1.6ex depth 0pt width 0.13ex\vrule height 0.13ex depth 0pt width 1.3ex}} 
\begin{document}

\numberwithin{equation}{section}
\newtheoremstyle{break}{15pt}{15pt}{\itshape}{}{\bfseries}{}{\newline}{}
\theoremstyle{break}
\newtheorem*{Satz*}{Theorem}
\newtheorem*{Rem*}{Remark}
\newtheorem*{Lem*}{Lemma}
\newtheorem{Satz}{Theorem}[section]
\newtheorem{Rem}[Satz]{Remark}
\newtheorem{Lem}[Satz]{Lemma}
\newtheorem{Prop}[Satz]{Proposition}
\newtheorem{Cor}[Satz]{Corollary}
\theoremstyle{definition}
\newtheorem{Def}[Satz]{Definition}

\parindent2ex

\begin{center}{\Large \bf A  density result for Sobolev functions and functions of higher order bounded variation with additional integrability constraints}
\end{center}

\begin{center}
 J. M\"uller
\end{center}

\noindent AMS Subject Classification:  26 B 30, 46 E 35\\
Keywords: Sobolev functions, higher order bounded variation, density of smooth functions

\begin{abstract}
We  prove density of smooth functions in subspaces of Sobolev- and higher order $BV$-spaces  of kind $W^{m,p}(\Omega)\cap L^q(\Omega-D)$ and $BV^m(\Omega)\cap L^q(\Omega-D)$,  respectively, where $\Omega\subset\R^n$ ($n\in\N$) is an open and bounded set with suitably smooth boundary, $m<n$ is a positive integer, $1\leq p<\infty$ s.t. $mp<n$, $D\Subset\Omega$ is a sufficiently regular open subset  and $q> np/(n-mp)$.  Here we say that a $W^{m-1,1}(\Omega)$-function is of $m$-th order bounded variation ($BV^m$) if its $m$-th  order partial derivatives in the sense of distributions are finite Radon measures. This takes up earlier results by C. Tietz and the author concerning functions with merely one order of differentiability which emerged in the context of a variational problem related to image analysis. In the connection of our methods we also investigate  a question concerning the boundary traces of $W^{1,p}(\Omega)\cap L^q(\Omega)$-functions.
\end{abstract}

\section{Introduction}
\label{intro}
In the study of a variational integral with applications to image processing, Christian Tietz and the author encountered the problem of approximating Sobolev and BV-functions which have additional summability properties  on a measurable subset of their domain. Namely we  considered  functionals of type
\begin{equation*}
 \mathcal{F}_{p,q}[u]=\intop_\Omega |\nabla u|^pdx+\intop_{\Omega-D}|u-f|^qdx 
\end{equation*} 
where $\Omega\subset\R^n$ is open and bounded with Lipschitz boundary, $D\subset\Omega$ is a measurable subset with $0<\mathcal {L}^n(D)<\mathcal{L}^n(\Omega)$, $f\in L^q(\Omega-D)$ is a given function and $u$ varies in  $W^{1,p}(\Omega)\cap L^q(\Omega-D)$, $1\leq p<q<\infty$. In case of $p=1$ one would rather study the problem $\mathcal{F}\rightarrow\min$ in the space $BV(\Omega)\cap L^q(\Omega-D)$ which naturally comes with a useful notion of compactness  in contrast to the non-reflexive space $W^{1,1}(\Omega)$ (see \cite{AFP}, Theorem 3.23, p. 132). 
For an outline of how the first and the second integral in the definition of $\mathcal{F}_{p,q}$ relate to the problems of image denoising and image inpainting, respectively, we would like to refer the interested reader to the introduction of \cite{FT}.\\
The following result revealed to be a key tool towards proving fine properties of solutions of $\mathcal{F}\rightarrow\min$:

\begin{Satz} [cf. \cite{FT}, Lemma 2.1 and Lemma 2.2]\label{Muti}
Let $\Omega\subset\R^n$ be open and bounded with Lipschitz boundary and $D\subset\Omega$ a measurable subset with $0<\mathcal {L}^n(D)<\mathcal{L}^n(\Omega)$.
\begin{enumerate}[(i)]
 \item If $u$ is in $W^{1,p}(\Omega)\cap L^q(\Omega-D)$, then there is a sequence of smooth functions $(\varphi_k)_{k=1}^\infty\subset C^\infty(\overline{\Omega})$ such that
\[
\|u-\varphi_k\|_{1,p;\Omega}+\|u-\varphi_k\|_{q;\Omega-D}\rightarrow 0\quad\text{ for }\quad k\rightarrow\infty.
\]
\item If $u$ is in $BV(\Omega)\cap L^q(\Omega-D)$, then there is a sequence of smooth functions $(\varphi_k)_{k=1}^\infty\subset C^\infty(\overline{\Omega})$ such that
\begin{align*}
 \|u-&\varphi_k\|_{1;\Omega}+\|u-\varphi_k\|_{q;\Omega-D}+\Bigl||\nabla u|(\Omega)-\intop_{\Omega}|\nabla\varphi_k|dx\Bigr|\\
&+\Bigl|\sqrt{1+|\nabla u|^2}(\Omega)-\intop_{\Omega}\sqrt{1+|\nabla \varphi_k|^2}dx\Bigr|\rightarrow 0\quad\text{ for }\quad k\rightarrow\infty.
\end{align*}

\end{enumerate}
\end{Satz}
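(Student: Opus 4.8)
\emph{Proof plan.} Both parts are reduced, by two truncation arguments, to the classical density of $C^\infty(\overline{\Omega})$ (which holds because $\partial\Omega$ is Lipschitz); the only genuinely new ingredient is the passage from $L^p$- (resp.\ $L^1$-) control on $\Omega$ to $L^q$-control on $\Omega-D$.

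\emph{Step 1: reduction to a bounded function.} For $M>0$ set $u_M:=\max\{-M,\min\{M,u\}\}$, so that $u_M\in W^{1,p}(\Omega)\cap L^\infty(\Omega)$ in case (i) and $u_M\in BV(\Omega)\cap L^\infty(\Omega)$ in case (ii). I claim $u_M\to u$ as $M\to\infty$ with respect to every quantity occurring in the statement. From $|u_M|\le|u|$, $u_M\to u$ a.e.\ in $\Omega$, and $|u|\in L^q(\Omega-D)$, dominated convergence gives $u_M\to u$ in $L^p(\Omega)$ (resp.\ $L^1(\Omega)$) and in $L^q(\Omega-D)$; in case (i) moreover $\nabla u_M=\chi_{\{|u|<M\}}\nabla u\to\nabla u$ in $L^p$. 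In case (ii) the coarea formula gives $|\nabla u_M|(\Omega)=\int_{-M}^{M}P(\{u>t\},\Omega)\,dt\uparrow|\nabla u|(\Omega)$, while for the area term one has $\sqrt{1+|\nabla u_M|^2}(\Omega)\le\sqrt{1+|\nabla u|^2}(\Omega)$ (obtained, say, by smoothly truncating a recovery sequence for the relaxed area functional) together with $\liminf_{M}\sqrt{1+|\nabla u_M|^2}(\Omega)\ge\sqrt{1+|\nabla u|^2}(\Omega)$ from lower semicontinuity along the $L^1$-convergent family $(u_M)$; hence the area converges as well. A diagonal argument now reduces both assertions to the case of a fixed \emph{bounded} $u$.

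\emph{Steps 2 and 3: smooth approximation of a bounded $u$ and upgrading to $L^q$.} Assume $u$ bounded and put $c:=\|u\|_{\infty;\Omega}$. Since $\partial\Omega$ is Lipschitz there exist $\psi_k\in C^\infty(\overline{\Omega})$ with $\psi_k\to u$ in $W^{1,p}(\Omega)$ in case (i), and with $\psi_k\to u$ in $L^1(\Omega)$, $\int_\Omega|\nabla\psi_k|\,dx\to|\nabla u|(\Omega)$ and $\int_\Omega\sqrt{1+|\nabla\psi_k|^2}\,dx\to\sqrt{1+|\nabla u|^2}(\Omega)$ in case (ii) (classical density of smooth functions up to the boundary, in the strict and in the area sense); after passing to a subsequence, $\psi_k\to u$ a.e.\ in $\Omega$. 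Fix $\Theta\in C^\infty(\R)$ with $\Theta(t)=t$ for $|t|\le c$, $|\Theta|\le c+1$, $0\le\Theta'\le1$, and set $\varphi_k:=\Theta\circ\psi_k\in C^\infty(\overline{\Omega})$. Since $\Theta$ is $1$-Lipschitz and $\Theta(u)=u$, one has $|\varphi_k-u|\le|\psi_k-u|$, whence $\varphi_k\to u$ in $L^p(\Omega)$ (resp.\ $L^1(\Omega)$), and in case (i) $\nabla\varphi_k=\Theta'(\psi_k)\,\nabla\psi_k\to\nabla u$ in $L^p$ by dominated convergence. Because $|\varphi_k|\le c+1$ and $\varphi_k\to u$ a.e.\ on the bounded set $\Omega$, dominated convergence yields $\varphi_k\to u$ in $L^q(\Omega)$, in particular $\|u-\varphi_k\|_{q;\Omega-D}\to0$. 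Finally, in case (ii) the pointwise bounds $|\nabla\varphi_k|=\Theta'(\psi_k)|\nabla\psi_k|\le|\nabla\psi_k|$ and $\sqrt{1+|\nabla\varphi_k|^2}\le\sqrt{1+|\nabla\psi_k|^2}$ give $\limsup_k\int_\Omega|\nabla\varphi_k|\,dx\le|\nabla u|(\Omega)$ and $\limsup_k\int_\Omega\sqrt{1+|\nabla\varphi_k|^2}\,dx\le\sqrt{1+|\nabla u|^2}(\Omega)$, while the matching lower bounds for the $\liminf$ follow from lower semicontinuity of the total variation and of the area functional along the $L^1$-convergent sequence $(\varphi_k)$; hence both integrals converge to the required limits. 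Combined with Step 1 this proves (i) and (ii).

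\emph{Where the difficulty lies.} The classical approximation alone is insufficient: the usual construction (mollification, or translation and mollification near $\partial\Omega$) controls $\|u-\varphi_k\|_{1,p;\Omega}$ but in general \emph{not} $\|u-\varphi_k\|_{q;\Omega-D}$ once $u\notin L^q(\Omega)$, because the regularised function can absorb the large values of $u$ sitting in $D$. Truncating $u$ in Step 1 decouples the two integrability scales (a bounded function belongs to every $L^r(\Omega)$, $r<\infty$), and smoothly truncating the already-constructed smooth approximants in Step 3 keeps them uniformly bounded, which is precisely what promotes the cheap $L^p$-convergence (resp.\ $L^1$-convergence) to $L^q$-convergence on $\Omega-D$; notably, no regularity assumption on $D$ is used anywhere.
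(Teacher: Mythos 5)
Your argument is correct and follows essentially the same route as the proof the paper relies on: Theorem \ref{Muti} is quoted from \cite{FT}, whose proof the author explicitly characterizes as a ``cut-off'' (truncation) procedure hinging on the chain rule --- precisely your Steps 1 and 3, where truncating $u$ and then smoothly truncating the classical approximants is what upgrades $L^p$- (resp.\ $L^1$-) control on $\Omega$ to $L^q$-control on $\Omega-D$ with no regularity of $D$. As the introduction notes, it is exactly this reliance on the first-order chain rule ($\nabla(\Theta\circ\psi_k)=\Theta'(\psi_k)\nabla\psi_k$) that prevents the method from extending to $m>1$, which is why the rest of the paper takes a different path.
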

Here, for a finite Radon measure $\mu$ the notation $|\mu|(\Omega)$ means the total variation and the expression $\sqrt{1+|\mu|^2}(\Omega)$ is defined in  the sense of convex functions of a measure as described in \cite{DT}: let $\mu=\mu^a(\mathcal{L}^n\mres\Omega)+\mu^s$ be the decomposition of $\mu$ into an absolutely continuous part w.r.t. the restriction of the $n$-dimensional Lebesgue measure to $\Omega$ with density $\mu^a\in L^1(\Omega)$  and a singular part $\mu^s\perp (\mathcal{L}^n\mres\Omega)$. Then, we define a measure $\sqrt{1+|\mu|^2}$ by setting
\[
 \sqrt{1+|\mu|^2}(B):=\big(\sqrt{1+|\mu^a|^2}(\mathcal{L}^n\mres{\Omega})\big)(B)+|\mu^s|(B)=\intop_{B}\sqrt{1+|\mu^a|^2}dx+|\mu^s|(B)
\]
for any Borel-set $B\subset\Omega$.
The aim of this note is to generalize  Theorem \ref{Muti} towards spaces of functions with higher order derivatives. The main results are:
\newpage
\begin{Satz}\label{Res1}
Let $\Omega\subset\R^n$ be open and bounded with Lipschitz boundary, $D\Subset\Omega$  an open precompact subset  with minimally smooth boundary\footnote{The term 'minimally smooth boundary' was coined by E.M. Stein in his book \cite{St}, p. 189 and refines the notion of a Lipschitz boundary slightly (for an explanation cf. section \ref{sec:2}).} and $u\in W^{m,p}(\Omega)\cap L^q(\Omega-D)$. Then there is a sequence of smooth functions $(\varphi_k)_{k=1}^\infty\subset C^\infty(\overline{\Omega})$ such that
\[
 \|u-\varphi_k\|_{m,p;\Omega}+\|u-\varphi_k\|_{q;\Omega-D}\rightarrow 0\quad\text{ for }\quad k\rightarrow\infty.
\]
\end{Satz}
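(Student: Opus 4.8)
The plan is to run the classical proof that $C^\infty(\overline\Omega)$ is dense in $W^{m,p}(\Omega)$ --- localization by a partition of unity, flattening of the boundary, an inward translation and a mollification --- while arranging the localization so that the mollifications used near $\partial D$ never let the set $\Omega-D$, on which the stronger norm $\|\cdot\|_{q}$ must be controlled, ``see'' values of $u$ from the interior of $D$, where only the Sobolev integrability $L^{np/(n-mp)}$ is available. (For $q$ at or below the Sobolev exponent $np/(n-mp)$ one has $W^{m,p}(\Omega)\cap L^q(\Omega-D)=W^{m,p}(\Omega)$ with equivalent norms and the statement is classical, so the genuine content sits in the stated regime $q>np/(n-mp)$.)

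For the set-up, minimal smoothness makes $\partial D$ an $\mathcal L^n$-null set, so that $\|\cdot\|_{q;\Omega-D}=\|\cdot\|_{q;\Omega\setminus\overline D}$, and $D\Subset\Omega$ gives $d_0:=\mathrm{dist}(\overline D,\partial\Omega)>0$. Cover $\overline\Omega$ by finitely many open sets of four kinds: boundary charts for $\partial\Omega$, all taken at distance $>d_0/4$ from $\overline D$; boundary charts for $\partial D$, all contained in $\{\mathrm{dist}(\cdot,\overline D)<d_0/2\}\Subset\Omega$ and, after a suitable rotation, cylinders in which $\partial D=\{x_n=\gamma(x')\}$ with $D$ the region below the graph; interior balls contained in $D$; and interior balls contained in $\Omega\setminus\overline D$. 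Let $L<\infty$ dominate the Lipschitz constants of all the graphs $\gamma$ that occur, pick a subordinate partition of unity $\{\zeta_U\}$ with $\sum_U\zeta_U\equiv1$ on $\overline\Omega$, and write $u=\sum_U\zeta_U u$. It then suffices to approximate each $\zeta_U u$ by smooth functions simultaneously in $W^{m,p}(\Omega)$ and in $L^q(\Omega\setminus\overline D)$ and to add the results.

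Three of the four cases are routine. If $U$ is an interior ball in $\Omega\setminus\overline D$, then $\zeta_U u$ is compactly supported in $\Omega\setminus\overline D$ and lies there in $W^{m,p}\cap L^q$, so its mollifications converge in both norms. If $U$ is an interior ball in $D$, then $\zeta_U u$ and --- once the radius is small --- all its mollifications are supported in $D$, hence vanish identically on $\Omega\setminus\overline D$, while the mollifications converge in $W^{m,p}$. If $U$ is a boundary chart for $\partial\Omega$, then $U\cap\Omega\subset\Omega\setminus\overline D$, so $u$ carries there both the $W^{m,p}$- and the $L^q$-regularity, and the standard translate-and-mollify procedure (mollification radius chosen much smaller than the translation) converges in $W^{m,p}(\Omega)$ and in $L^q(\Omega)$, since translation and mollification are continuous operations on both spaces.

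The remaining case, a boundary chart $U$ for $\partial D$, is the one needing a new idea and is the main obstacle. There the contaminated region $D\cap U=\{x_n<\gamma(x')\}$ carries only $L^{np/(n-mp)}$-integrability, while on $\{x_n>\gamma(x')\}$ one must still obtain $L^q$-convergence. Note that $\zeta_U u$, extended by zero, lies in $W^{m,p}(\R^n)$ (its support is a compact subset of $\Omega$), and that $g$, the extension by zero of $(\zeta_U u)|_{\{x_n>\gamma(x')\}}$, lies in $L^q(\R^n)$. I would first replace $\zeta_U u$ by $w_t(x):=(\zeta_U u)(x+te_n)$ for small $t>0$ --- this shifts the contaminated values a distance $t$ deeper into $D$, away from the interface: as $t\to0$, $w_t\to\zeta_U u$ in $W^{m,p}(\R^n)$, and over $\{x_n>\gamma(x')\}$ one has $w_t(x)=g(x+te_n)$, so $w_t\to\zeta_U u$ in $L^q(\{x_n>\gamma(x')\})$ as well. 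I would then mollify $w_t$ with a radius $\varepsilon<t/(L+1)$: an elementary estimate with the graph condition shows that whenever $x$ lies over the graph the whole ball $B_\varepsilon(x+te_n)$ lies over the graph too, so $w_t*\rho_\varepsilon$, restricted to $\{x_n>\gamma(x')\}$, averages only values of $u$ taken in $\Omega\setminus\overline D$; hence $w_t*\rho_\varepsilon\to w_t$ as $\varepsilon\to0$ both in $W^{m,p}(\R^n)$ and, thanks to this shielding (and $g\in L^q(\R^n)$), in $L^q(\{x_n>\gamma(x')\})$, with no blow-up. A diagonal choice $t=t_k\to0$, $\varepsilon_k\ll t_k$, yields the approximants of $\zeta_U u$, and summation over the finite cover completes the proof. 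This shielded mollification at $\partial D$ is the crux and the only place where the regularity of $\partial D$ is used in an essential way; it is also the reason the truncation device that settles $m=1$ in \cite{FT} is unavailable, truncation not preserving $W^{m,p}$ once $m\ge2$.
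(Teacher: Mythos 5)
Your proof is correct, and the shielding inequality $\varepsilon<t/(L+1)$ at the $\partial D$-charts is exactly the right quantitative point: it forces $w_t*\rho_\varepsilon$ to coincide, above the graph, with $g_t*\rho_\varepsilon$ for the $L^q(\R^n)$-function $g_t=g(\cdot+te_n)$, so no values from inside $D$ enter the $L^q$-estimate. The underlying idea --- create a positive safety margin inside $D$ before smoothing --- is the same as in the paper, but your implementation is genuinely different and more elementary. The paper produces the margin by \emph{dilations}: it first extends $u$ to all of $\R^n$ with Stein's operator (Proposition \ref{Ext}), shrinks $D$ to $D_\alpha=(1/\alpha)D$ via $u'(\alpha x)$ when $D$ is star-shaped (Lemma \ref{Wkp}), treats general $D$ by finitely many localized dilations $(\eta_i u_{i-1})(\alpha_i x)+(1-\eta_i)u_{i-1}$ centred in covering balls (which are automatically star-shaped), and finally glues separate smooth approximations on $\R^n-\overline{D_\alpha}$ and on $D$ by matching the boundary traces of $\nabla^k$ for all $k\le m-1$ (Proposition \ref{Pr1}). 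Your translation in the graph direction removes the need for star-shapedness, for the extension operator near $\partial D$, and for the trace-matching gluing altogether, since the mollified translates are already globally smooth; it also isolates cleanly that only the local Lipschitz-graph structure of $\partial D$ is used. What the paper's heavier machinery buys is reusability: the extension--dilation--glue scheme carries over almost verbatim to $BV^m(\Omega)\cap L^q(\Omega-D)$ (Theorem \ref{Res2}), where a mollification argument is delicate because only strict convergence of $|\nabla^m u|$ is available and, as the paper remarks, local surgery interacts badly with the metric $d_f$; your route would need reworking there. Two bookkeeping points to make explicit in a write-up: the $\partial D$-chart approximants spill at most a distance $t+\varepsilon$ beyond $\operatorname{spt}\zeta_U$, so $t,\varepsilon$ must also be small enough that the identification of $\Omega-D$ with $\{x_n>\gamma(x')\}$ still holds on their supports; and the $\partial\Omega$-chart approximants must be arranged, as in the classical Lipschitz-boundary argument, to be smooth on a full neighbourhood of $\overline{\Omega}\cap\operatorname{spt}\zeta_U$, which is what places the final sum in $C^\infty(\overline{\Omega})$.
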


\begin{Satz}\label{Res2}
Let $\Omega\subset\mathbb{R}^n$ be open and bounded with $C^1$-boundary, $D\Subset\Omega$ an open precompact subset with $C^1$-boundary  which is star-shaped with respect to a point  $x_0\in D$  and $u\in BV^m(\Omega)\cap L^q(\Omega-D)$. Then there is a sequence of smooth functions $(\varphi_k)_{k=1}^\infty\subset C^\infty(\overline{\Omega})$ such that
\begin{align*}
 \|u-&\varphi_k\|_{m-1,1;\Omega}+\|u-\varphi_k\|_{q;\Omega-D}+\Bigl||\nabla^mu|(\Omega)-\intop_{\Omega}|\nabla^m\varphi_k|dx\Bigr|\\
&+\Bigl|\sqrt{1+|\nabla^mu|^2}(\Omega)-\intop_{\Omega}\sqrt{1+|\nabla^m\varphi_k|^2}dx\Bigr|\rightarrow 0\quad\text{ for }\quad k\rightarrow\infty.
\end{align*}
\end{Satz}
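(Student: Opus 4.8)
\emph{Strategy.} I would combine three ingredients: (a) a deformation of $u$, localised near $D$ and built from dilations centred at $x_0$, which uses the star--shapedness of $D$ to push the ``bad region'' $D$ (where $u$ is only controlled in $W^{m-1,1}$) into a \emph{compact} subset of $D$, thereby creating a buffer shell around $\partial D$ on which $u$ is genuinely in $L^q$; (b) the classical strict mollification scheme of Anzellotti--Giaquinta, adapted to $BV^m$ and run with a partition of unity whose mollification radii stay small compared with that buffer; and (c) a boundary regularisation near $\partial\Omega$. A diagonal argument then produces the sequence. \emph{Step 1 (the deformation).} Fix $\chi\in C_c^\infty(\Omega;[0,1])$ with $\chi\equiv 1$ on a neighbourhood of $\overline D$ and $\mathrm{supp}\,\chi\Subset\Omega$, and for $t\in(0,1)$ set $\Phi_t(x):=x-t\,\chi(x)\,(x-x_0)$, viewed as a map of $\R^n$. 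For $t$ small this is a $C^\infty$--diffeomorphism of $\R^n$ that equals the identity off $\mathrm{supp}\,\chi$, maps $\Omega$ onto $\Omega$, and tends to the identity in $C^k(\overline\Omega)$ for every $k$. On $\overline D$ it coincides with the dilation $x\mapsto x_0+(1-t)(x-x_0)$, so by star--shapedness and the $C^1$--regularity of $\partial D$ one has $\Phi_t(\overline D)\Subset D$; put $K_t:=\overline{\Phi_t(D)}\Subset D$ and $\delta_t:=\mathrm{dist}(\partial D,K_t)>0$. Define $v_t:=u\circ\Phi_t^{-1}$.

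\emph{Step 2 (the deformation converges).} Since $\Phi_t$ is a smooth diffeomorphism, $v_t\in BV^m(\Omega)$: the derivatives of order $\le m-1$ transform by the chain rule into $L^1$--functions, and $\nabla^m v_t$ equals the push--forward of the measure $\nabla^m u$ (multiplied by a smooth bounded tensor built from $D\Phi_t^{-1}$) plus lower--order $L^1$--remainders, hence a finite Radon measure. Moreover, because $\Phi_t(\overline D)\Subset D$ one has $\Phi_t^{-1}(\Omega-D)\subset\Omega-D$, so $v_t\in L^q$ on the open set $W_t:=\Omega\setminus K_t\supset\Omega-D$ with $\mathrm{dist}\bigl(\partial D,\Omega\setminus W_t\bigr)\ge\delta_t$. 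The claim is that $v_t\to u$ as $t\to0$ in $W^{m-1,1}(\Omega)$, in $L^q(\Omega-D)$, and area--strictly for $\nabla^m$, i.e.
\[
 |\nabla^m v_t|(\Omega)\to|\nabla^m u|(\Omega)\quad\text{and}\quad\sqrt{1+|\nabla^m v_t|^2}(\Omega)\to\sqrt{1+|\nabla^m u|^2}(\Omega).
\]
The first convergence follows from the change--of--variables formula together with $\Phi_t\to\mathrm{id}$ in $C^k$; the second from $\Phi_t^{-1}(\Omega-D)\subset\Omega-D$ and continuity of composition with $\Phi_t^{-1}$ on $L^q(\R^n)$ applied to the zero--extension of $u\,\mathbf 1_{\Omega-D}$. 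For the third one uses the chain--rule identity above: from $D\Phi_t^{-1}\to I$ uniformly and the $L^1$--smallness of the remainders one gets $\limsup_{t\to0}|\nabla^m v_t|(\Omega)\le|\nabla^m u|(\Omega)$ (the push--forward of $\nabla^m u$ has total variation $|\nabla^m u|(\Omega)$), while $\nabla^m v_t\,\mathcal L^n$... rather $\nabla^m v_t\rightharpoonup^*\nabla^m u$ because $v_t\to u$ in $W^{m-1,1}$, whence lower semicontinuity forces convergence of the total variations; Reshetnyak's continuity theorem (\cite{AFP}), applied to $\xi\mapsto\sqrt{1+|\xi|^2}$ on the associated graph measures, then upgrades strict convergence to the area term. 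This ``deformations are area--strictly continuous on $BV^m$'' lemma is the technical heart of the argument.

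\emph{Step 3 (strict mollification with a buffer).} Fix $t$ small and write $v:=v_t$, $W:=W_t$, $\delta:=\delta_t$. Apply the Anzellotti--Giaquinta construction to $v$: choose an exhaustion $\Omega_0\Subset\Omega_1\Subset\cdots$ of $\Omega$, the open cover $A_0:=\Omega_1$, $A_i:=\Omega_{i+1}\setminus\overline{\Omega_{i-1}}$, a subordinate partition of unity $(\phi_i)$, a mollifier $\rho$, and radii $\varepsilon_i>0$ so small that the family $\bigl(\mathrm{supp}((\phi_i v)*\rho_{\varepsilon_i})\bigr)_i$ stays locally finite, that $\varepsilon_i<\delta$ whenever $\mathrm{supp}\,\phi_i$ meets a fixed neighbourhood of $\partial D$, that $\|(\phi_i v)*\rho_{\varepsilon_i}-\phi_i v\|_{m-1,1;\Omega}+\|(\phi_i v)*\rho_{\varepsilon_i}-\phi_i v\|_{q;\Omega-D}<2^{-i}\varepsilon$, and that $\|(\nabla^j\phi_i\,\nabla^{m-j}v)*\rho_{\varepsilon_i}-\nabla^j\phi_i\,\nabla^{m-j}v\|_{1;\Omega}<2^{-i}\varepsilon$ for $1\le j\le m$ (meaningful since $\nabla^{m-j}v\in L^1(\Omega)$ for $j\ge1$, as $v\in W^{m-1,1}(\Omega)$). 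Put $v_\varepsilon:=\sum_i(\phi_i v)*\rho_{\varepsilon_i}\in C^\infty(\Omega)$. Using the Leibniz rule and $\sum_i\nabla^j\phi_i\equiv0$ for $1\le j\le m$, the part of $\nabla^m v_\varepsilon$ carrying derivatives of the $\phi_i$ has $L^1$--norm $\le C\varepsilon$, while $\sum_i(\phi_i\nabla^m v)*\rho_{\varepsilon_i}$ has total variation at most $\sum_i\bigl(\phi_i|\nabla^m v|\bigr)*\rho_{\varepsilon_i}(\Omega)\le|\nabla^m v|(\Omega)+C\varepsilon$ by local finiteness; hence $\limsup_{\varepsilon\to0}|\nabla^m v_\varepsilon|(\Omega)\le|\nabla^m v|(\Omega)$, and with $v_\varepsilon\to v$ in $W^{m-1,1}(\Omega)$, lower semicontinuity and Reshetnyak's theorem give strict and area--strict convergence $\nabla^m v_\varepsilon\to\nabla^m v$. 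On $\Omega-D$ every summand with $\mathrm{supp}\,\phi_i$ disjoint from the fixed neighbourhood of $\partial D$ is eventually supported inside $K_t$ (hence irrelevant there), and the remaining ones sample only $v|_W\in L^q(W)$ because $\varepsilon_i<\delta$; thus $v_\varepsilon\to v$ in $L^q\bigl((\Omega-D)\setminus N\bigr)$ for every neighbourhood $N$ of $\partial\Omega$.

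\emph{Step 4 (up to the boundary) and conclusion.} Since $\partial\Omega$ is $C^1$, a standard boundary regularisation of each $v_\varepsilon\in C^\infty(\Omega)$ -- local flattening of $\partial\Omega$, inward translation by $\tau\downarrow0$, and a further mollification, patched by a finite partition of unity near $\partial\Omega$ -- produces functions in $C^\infty(\overline\Omega)$ changing the $W^{m-1,1}(\Omega)$--norm, the $L^q(\Omega-D)$--norm and the $\nabla^m$ area--strict quantities by arbitrarily little; this modification lives in a neighbourhood of $\partial\Omega$ disjoint from $\overline D$, where $u$ (hence $v_\varepsilon$) is honestly in $L^q$, so it also controls the $L^q$--discrepancy left over near $\partial\Omega$ in Step 3. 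Combining Steps 2--4, a diagonal choice $t=t_k\downarrow0$, then $\varepsilon=\varepsilon(k)$, then $\tau=\tau(k)$, yields $\varphi_k\in C^\infty(\overline\Omega)$ with
\[
 \|u-\varphi_k\|_{m-1,1;\Omega}+\|u-\varphi_k\|_{q;\Omega-D}+\Bigl||\nabla^mu|(\Omega)-\intop_\Omega|\nabla^m\varphi_k|\,dx\Bigr|+\Bigl|\sqrt{1+|\nabla^mu|^2}(\Omega)-\intop_\Omega\sqrt{1+|\nabla^m\varphi_k|^2}\,dx\Bigr|\to0,
\]
which is the assertion. \emph{Main obstacle.} The crux is the incompatibility, for a function that is merely $W^{m-1,1}$ on $D$, between ordinary mollification (which ``leaks'' the non--$L^q$ behaviour across $\partial D$ and so kills the $L^q(\Omega-D)$--convergence) and area--strict approximation of $\nabla^m u$; the star--shaped dilation of Step 1 is precisely the device that removes it, and the price one pays is the area--strict continuity lemma of Step 2, whose proof rests on Reshetnyak's continuity theorem and careful bookkeeping of the chain--rule remainders. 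The boundary regularisation of Step 4, though classical, must still be checked not to spoil strict convergence, which is where the $C^1$--regularity of $\partial\Omega$ is used.
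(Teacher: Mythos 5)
Your architecture is genuinely different from the paper's, though it rests on the same geometric mechanism. The paper first replaces $u$ on $\Omega-\overline D$ by a smooth function with matching traces on $\partial D$ (Propositions \ref{BVmq} and \ref{Pr1BV}), extends that smooth piece to $\R^n$ by Stein's operator (Proposition \ref{Ext}), glues it back to $u_{|D}$, and only then applies a single \emph{global} dilation $x\mapsto x(\alpha\,\cdot)$; the final smoothing is performed on $\R^n-\overline{D_\alpha}$ and on $D$ separately as in Lemma \ref{Wkp}, so smoothness up to $\partial\Omega$ comes for free and no boundary regularisation is needed. Your localized diffeomorphism $\Phi_t$, followed by the partition-of-unity mollification and a boundary regularisation, exploits the same star-shape idea and is viable, at the price of a heavier chain-rule bookkeeping for the measure $\nabla^m(u\circ\Phi_t^{-1})$ and of the extra Step 4.

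There is, however, a genuine gap, and it occurs twice (Steps 2 and 3): you assert that once strict convergence is established (weak-$*$ convergence of $\nabla^m v_t$ plus convergence of the total variations), Reshetnyak's continuity theorem ``upgrades'' it to convergence of the area term. That implication is false. Strict convergence does \emph{not} imply area-strict convergence: take $u_k(x)=x+k^{-1}\sin(kx)$ on $(0,2\pi)$, so $u_k'=1+\cos(kx)\ge 0$, $\int_0^{2\pi}|u_k'|\,dx=2\pi=\int_0^{2\pi}|u'|\,dx$ with $u(x)=x$, i.e.\ $u_k\to u$ strictly, yet
\[
\intop_0^{2\pi}\sqrt{1+(u_k')^2}\,dx=\intop_0^{2\pi}\sqrt{1+(1+\cos s)^2}\,ds>2\pi\sqrt2=\intop_0^{2\pi}\sqrt{1+(u')^2}\,dx
\]
by strict convexity of $\xi\mapsto\sqrt{1+\xi^2}$. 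Reshetnyak continuity applies to positively $1$-homogeneous integrands; to reach $\sqrt{1+|\xi|^2}$ one must pass to the graph measures $(\mathcal L^n,\nabla^m v_t)$, and the theorem then demands strict convergence of \emph{those} measures, which is exactly the area-strict convergence you are trying to prove. As written, the argument is circular.

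The repair is to prove the convergence of the area term directly in each instance. For the deformation of Step 2, split $\nabla^m u$ into absolutely continuous and singular parts: the a.c.\ density of $\nabla^m v_t$ converges to $\nabla^m_a u$ in $L^1(\Omega)$ (continuity of composition with $\Phi_t^{-1}\to\mathrm{id}$ plus the vanishing lower-order remainders), so dominated convergence gives $\intop_\Omega\sqrt{1+|\nabla^m_a v_t|^2}\,dx\to\intop_\Omega\sqrt{1+|\nabla^m_a u|^2}\,dx$, while the singular part is a push-forward whose total variation converges; this is precisely the computation the paper carries out for its global dilation. For the mollification of Step 3 you additionally need the Jensen-type upper bound for the convex function $\sqrt{1+|\cdot|^2}$ under convolution, which yields $\limsup_{\varepsilon\to 0}\intop_\Omega\sqrt{1+|\nabla^m v_\varepsilon|^2}\,dx\le\sqrt{1+|\nabla^m v|^2}(\Omega)$ and, together with Reshetnyak \emph{lower semicontinuity}, the claimed convergence; this is the content of Theorem 2.2 of \cite{DT}, which the paper simply cites. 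With these two repairs, and the (routine but necessary) check that the boundary regularisation of Step 4 perturbs the area term only by $\mathcal L^n(\text{strip})+|\nabla^m v_\varepsilon|(\text{strip})$, both small, your proof goes through.
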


The interest in a corresponding version of Theorem \ref{Muti} for higher orders of differentiability originates in the consideration of the functional which one gets after replacing the  gradient operator in the definition of $\mathcal{F}_{p,q}$ by its higher order analogue, $\nabla^mu:=\big(\tfrac{\partial}{\partial x_{i_1}}...\tfrac{\partial}{\partial x_{i_m}}u\big)_{i_1,...,i_m=1}^n$ which yields the functional
\[
\mathcal{F}_{m,p,q}[u]=\intop_{\Omega}|\nabla^mu|^pdx+\intop_{\Omega-D}|u-f|^qdx
\]
for $1<p<\infty$ and 
\[
 \mathcal{F}_{m,1,q}[u]=|\nabla^m u|(\Omega)+\intop_{\Omega-D}|u-f|^qdx.
\]
for $p=1$. For special choices of $\Omega$ and $f$, solutions of $\mathcal{F}\rightarrow\min$ can be interpreted in the context of higher order denoising/inpainting of images, which is a current field of investigation in image analysis, see, e.g., \cite{BKP}. As for $m=1$, an adequate approximation result  in the spirit of Theorem \ref{Muti} is useful for the investigation of (generalized) minimizers of $\mathcal{F}_{m,p,q}$. In this note, however, we restrict ourselves to the proofs of Theorems \ref{Res1} and \ref{Res2}  and postpone their applications to variational problems of higher order to a separate paper.

One should note at this point, that due to  Sobolev's embedding theorem we have for $mp<n$, that any function $u\in W^{m,p}(\Omega)$ is at least $np/(n-mp)$-summable and as a direct consequence of this and the embedding $BV(\Omega)\hookrightarrow L^{\nicefrac{n}{(n-1)}}(\Omega)$, any $u\in BV^m(\Omega)$ is $n/(n-m)$-summable; so an actual problem does not arise unless $q$ is 'large enough', which we want to assume tacitly from now on.

The methods for proving Theorem \ref{Muti}  were customized to grasp the case of merely one order of differentiability and fail for the general case since they crucially rely on a 'cut-off' procedure which turns out to be unsuitable  owing to the appearance of higher order terms from the iterated chain rule. So we had to pursue an entirely different approach which involves extending functions from $\Omega$ to $\R^n$ as well as a 'blow-up'-type argument, and therefore, unfortunately, goes along with much more rigorous restrictions on the geometry of $\Omega$ and $D$. Considerations on how to weaken the assumptions of Theorem \ref{Res1} led to a result on the boundary traces of Sobolev functions in the space $W^{1,p}(\Omega)\cap L^q(\Omega)$ which, albeit it does not confirm our expectations,  may be interesting in its own right for this very reason:
\begin{Satz}\label{Res3}
Let $\Omega=\R^{n-1}\times (0,\infty)\subset\R^n$, $1\leq p<\infty$, $\frac{np}{n-p}<q<\infty$ and $T:W^{1,p}(\Omega)\rightarrow L^p(\partial\Omega)=L^p(\R^{n-1})$ denote the trace map. Then, the following holds:
\begin{enumerate}[(i)]
 \item The images  $T\left(W^{1,1}(\Omega)\cap L^q(\Omega)\right)$ and $T\left(W^{1,1}(\Omega)\right)=L^1(\R^{n-1})$  coincide for any $1\leq q<\infty$.
 \item For $1<p<\infty$, the image  $T\left(W^{1,p}(\Omega)\cap L^q(\Omega)\right)$  is a proper subspace of $T(W^{1,p}(\Omega))=W^{1-\nicefrac{1}{p},p}(\R^{n-1})$.
\end{enumerate}
 
\end{Satz}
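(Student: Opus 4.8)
The plan is to treat the two parts separately, with part~(i) being a genuine equality and part~(ii) a strict-inclusion statement requiring a counterexample.

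For part~(i): the inclusion $T(W^{1,1}(\Omega)\cap L^q(\Omega))\subseteq T(W^{1,1}(\Omega))=L^1(\mathbb R^{n-1})$ is trivial, so the content is the reverse inclusion. Given $g\in L^1(\mathbb R^{n-1})$, I need to produce a function $u\in W^{1,1}(\Omega)\cap L^q(\Omega)$ with trace $g$. The first idea is to take a standard extension/lifting of $g$ into the half-space, e.g. $u_0(x',x_n)=(g*\rho_{x_n})(x')$ with a mollifier scaled by $x_n$, which lies in $W^{1,1}(\Omega)$ and has trace $g$; the issue is that it need not be in $L^q(\Omega)$. To fix this, multiply by a cutoff in the $x_n$ variable: set $u(x',x_n)=\eta(x_n)u_0(x',x_n)$ where $\eta\in C_c^\infty([0,\infty))$ equals $1$ near $0$. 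This does not change the trace, keeps $u\in W^{1,1}$, and now $u$ is supported in $\mathbb R^{n-1}\times(0,R)$. On that slab the remaining task is to check $u\in L^q$; since $q>np/(n-p)\ge n/(n-1)$ for $p\ge1$, and $u_0$ is built from an $L^1$ function, one can either argue that the mollified function $x'\mapsto (g*\rho_{x_n})(x')$ is in $L^\infty$ for each fixed $x_n>0$ with a bound like $\|g\|_1 x_n^{-(n-1)}$, hence in $L^q$ locally in $x_n$ but with a non-integrable blow-up as $x_n\to0$; so a plain lifting still fails and one must be smarter. The cleaner route: approximate $g$ in $L^1$ by $g_j\in C_c^\infty(\mathbb R^{n-1})$, lift each $g_j$ to a compactly supported smooth $w_j$ on $\overline\Omega$ (which is automatically in $W^{1,1}\cap L^q$), and observe that $T$ is continuous from $W^{1,1}(\Omega)\cap L^q(\Omega)$ only with respect to the $W^{1,1}$-norm on the image side; but we want surjectivity, not continuity. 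Better: use that the lifting operator $g\mapsto u_0$ from $W^{1-1/1,1}=L^1$ is bounded into $W^{1,1}$, and separately build, for each $g$, a correction; honestly the simplest correct argument is: fix the extension $u_0$ as above with the $x_n$-cutoff, then replace it by $u(x',x_n)=u_0(x',\phi(x_n))$ after a change of variable, or truncate the values: since $q$-integrability only fails near the boundary and $u_0(\cdot,x_n)\to g$ in $L^1$, use $u(x',x_n):=\tau_{M(x_n)}(u_0(x',x_n))$ where $\tau_M$ is truncation at height $M(x_n)$ chosen growing slowly enough that $\int_0^R M(x_n)^{q}\,\|\rho_{x_n}\|\,dx_n<\infty$ while still $\tau_{M(x_n)}(u_0(\cdot,x_n))\to g$ in $L^1$ and gradients stay $L^1$. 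This is the technical heart of (i).

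For part~(ii): here I expect a genuine obstruction, so the job is to exhibit $g\in W^{1-1/p,p}(\mathbb R^{n-1})$ which is \emph{not} the trace of any $W^{1,p}(\Omega)\cap L^q(\Omega)$-function, when $1<p<\infty$ and $q>np/(n-p)$. The natural candidate is a function with a mild singularity, like $g(x')=|x'|^{-\alpha}\chi(x')$ near the origin (with $\chi$ a cutoff), where $\alpha$ is chosen so that $g\in W^{1-1/p,p}(\mathbb R^{n-1})$ but $|x'|^{-\alpha}$ is barely outside $L^{q}$ on a lower-dimensional set in the appropriate Sobolev-trace sense. The key quantitative input is: any $u\in W^{1,p}(\Omega)\cap L^q(\Omega)$ satisfies, for each fixed small $x_n$, an estimate comparing $\|u(\cdot,x_n)\|_{L^q(\mathbb R^{n-1})}$ with $\|u(\cdot,x_n)\|_{L^q(\Omega\cap\{0<x_n<\delta\})}$ and the $W^{1,p}$-seminorm, via the $n$-dimensional Sobolev embedding $W^{1,p}(\Omega)\hookrightarrow L^{np/(n-p)}(\Omega)$; since $q$ exceeds this exponent, having $u\in L^q(\Omega)$ forces extra decay of the slices $u(\cdot,x_n)$ that a trace $g$ lying merely in $W^{1-1/p,p}$ with the above singularity cannot have. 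Concretely, one shows: if $Tu=g$ then $g=\lim_{x_n\to0}u(\cdot,x_n)$ in $L^p$ and, by interpolating the $L^q(\Omega)$ control of slices with the gradient control, $g$ must lie in some space strictly smaller than $W^{1-1/p,p}$ — e.g. a Lorentz- or weighted-Sobolev refinement — and the chosen $|x'|^{-\alpha}$ violates that. I would make this precise by testing against the explicit singular $g$ and running a scaling/dimension count: the condition for $|x'|^{-\alpha}\in W^{1-1/p,p}$ near $0$ in $\mathbb R^{n-1}$ is $\alpha<(n-1)/p-(1-1/p)=(n-p)/p$, while the condition forced by $L^q(\Omega)$ on the lifted profile turns out to be $\alpha<(n-1)/q<(n-p)/p$ (using $q>np/(n-p)$), leaving a nonempty window of $\alpha$ producing a legitimate counterexample.

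The main obstacle, and the part I would expect to occupy most of the work, is the rigorous lower bound in part~(ii): proving that \emph{no} $W^{1,p}\cap L^q$ extension of the singular $g$ can exist. The inclusion direction and part~(i) reduce to constructing one good function, which is soft; but ruling out all extensions requires a clean trace inequality of the form ``$\|g\|_{X}\lesssim \|u\|_{W^{1,p}(\Omega)}^{\theta}\|u\|_{L^q(\Omega)}^{1-\theta}$'' for a suitable intermediate space $X\subsetneq W^{1-1/p,p}$, which I would derive by slicing in $x_n$, applying the fundamental theorem of calculus $u(x',0)=u(x',t)-\int_0^t\partial_n u(x',s)\,ds$, taking $L^p_{x'}$ norms, integrating in $t\in(0,\delta)$ against a weight, and invoking Hölder together with the supercritical integrability $u\in L^q$. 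Getting the exponents in that interpolation to land exactly in the gap $(n-1)/q<\alpha<(n-p)/p$ — and checking this gap is nonempty precisely under the hypothesis $q>np/(n-p)$ — is the delicate bookkeeping that makes the theorem true and is where I would be most careful.
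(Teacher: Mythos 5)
Your part (i) rests on the claim that the mollification lifting $u_0(x',x_n)=(g\ast\rho_{x_n})(x')$ of a general $g\in L^1(\R^{n-1})$ lies in $W^{1,1}(\Omega)$, and this is false. The only available bound is $\|\nabla'(g\ast\rho_t)\|_{1;\R^{n-1}}\leq \|g\|_{1}\,\|\nabla\rho_t\|_{1}\sim t^{-1}\|g\|_1$ (and similarly for the $x_n$-derivative), and for $g$ merely in $L^1$ this rate is in general attained, so $\int_0^R\|\nabla u_0(\cdot,t)\|_{1}\,dt$ diverges; its finiteness would force $g$ into a Besov-type proper subspace of $L^1$. Your subsequent repairs (cutoff in $x_n$, truncation at height $M(x_n)$) only attack the $L^q$ failure, not the $W^{1,1}$ failure: truncation removes the function where it is large but leaves the gradient essentially untouched where the function is small, so the $t^{-1}$ divergence survives. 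The ``cleaner route'' you briefly mention --- approximate $g$ in $L^1$ by $\varphi_k\in C_0^\infty$ --- is the correct starting point, but what is then needed is Gagliardo's staircase, which is exactly what the paper builds: stack $\varphi_k$ on slabs $\R^{n-1}\times[\eps_k,\eps_{k-1})$ whose thicknesses $\delta_k$ are chosen small against \emph{both} $\|\nabla\varphi_k\|_{1}$ and $\|\varphi_k\|_{q}$ (this second tuning is precisely how the $L^q$ constraint is absorbed), check that the resulting piecewise function lies in $BV(\Omega)\cap L^q(\Omega)$ with jump part controlled by $\sum_k\|\varphi_{k+1}-\varphi_k\|_{1}$, and finally mollify with the $t$-dependent kernel $\rho_{t/2}$ to land in $W^{1,1}(\Omega)\cap L^q(\Omega)$ with trace $g$. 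You abandoned this route prematurely (``we want surjectivity, not continuity''), so part (i) is not established by your argument.

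Part (ii) is, in outline, the same strategy as the paper's: show that membership in $W^{1,p}(\Omega)\cap L^q(\Omega)$ forces the trace into a strictly smaller space than $W^{1-\nicefrac{1}{p},p}(\R^{n-1})$, then note that a power singularity realizes the gap. The paper obtains the improved trace integrability directly from the Brezis--Mironescu fractional Gagliardo--Nirenberg inequality $\|u\|_{s,p(s);\Omega}\leq C\|u\|_{q;\Omega}^{1-s}\|u\|_{1,p;\Omega}^{s}$, which places $u$ in $W^{s_0,p(s_0)}(\Omega)$ with $p(s_0)$ exceeding the sharp trace exponent $(n-1)p/(n-p)$ precisely when $q>np/(n-p)$, and then invokes the known optimality of that exponent. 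Your plan to reprove such an inequality by hand via slicing and the fundamental theorem of calculus is harder than necessary and is left entirely unexecuted; moreover your exponent bookkeeping is off: the constraint forced on the singularity is $\alpha<(n-1)/p_{\max}$ with $p_{\max}=q(1-\nicefrac{1}{p})+1$, not $\alpha<(n-1)/q$, and with your numbers the window would already be nonempty for $q>(n-1)p/(n-p)$, which is weaker than the hypothesis. So (ii) is a plausible sketch pointing in the right direction, but the key inequality --- the actual content of the proof --- is missing.
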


\begin{Rem}
 Although Theorem \ref{Res3} is formulated for the special case of $\Omega$ being a half-space, it extends to arbitrary Lipschitz domains via the standard procedure of localizing  with a suitable partition of unity and then retracting the general case to the half-space setting by piecewise  flattening the boundary.
\end{Rem}

 At this point I want to express particular thanks to Prof. Dr. M. Bildhauer of Saarland University for many fruitful discussions as well as to Prof. Dr. M. Fuchs, my PhD advisor, for directing my interest upon this topic. Further thanks go to Christian Tietz for valuable feedback and assessment. Finally I would like to thank Prof. Dr. J. Weickert for  supporting  my research  both financially and with his advice whenever it comes to questions from the field of image analysis.

The subsequent section introduces most of our (non-standard) notation and in particular explains  our conception of higher order bounded variation. It is followed by a section which gathers some needful results on Sobolev functions which might be common but can hardly be found in literature. Finally, sections \ref{sec:4} and \ref{sec:5} treat the proofs of Theorems \ref{Res1} and \ref{Res2}, respectively. The last section is devoted to the proof of Theorem \ref{Res3}.

\section{Preliminaries}
\label{sec:1}

\subsection{Notation and conventions, the space $BV^m(\Omega)$}
\label{sec:2}

Throughout the following, unless otherwise mentioned, $\Omega$ denotes an at least open and bounded subset of Euclidean space $\big(\R^n,|\,\cdot\,|\big)$  for $n\in\N$  with Lipschitz-regular boundary and $D\Subset\Omega$ is  an open, precompact subset with Lipschitz-boundary as well.  We adopt the notion of 'minimally smooth' boundaries from \cite{St} which means that there is an $\eps>0$, a  covering $(U_i)_{i=1}^\infty$ of $\partial\Omega$ through open sets, an integer $N$ and a positive real $L$ such that the following three conditions hold:
\begin{enumerate}[(i)]
 \item If $x\in\partial \Omega$, then $B_\eps(x)\subset U_i$ for some $i$.
 \item No point of $\R^n$ is contained in more than $N$ of the $U_i$'s.
 \item For each $i$, there are coordinates $(x_1,...,x_n)$ s.t. $\Omega\cap U_i$ can be written as $\{(x_1,...,x_n)\in U_i\;:x_n<\varphi_i(x_1,...,x_{n-1})\;\}$ with a Lipschitz-continuous function $\varphi_i:\R^{n-1}\rightarrow\R$ and $\text{Lip}(\varphi_i)\leq L$.
\end{enumerate}
The class of all sets with minimally smooth boundary  contains, e.g., open and bounded convex sets or open and bounded sets with $C^1$-boundary.
With $\Omega^\eps$ ($\Omega_\eps$) we denote the outer (inner) parallel set of $\Omega$ in distance $\eps$:
\[
\Omega^\eps:=\{x\in \R^n\;:\;\text{dist}(x,\Omega)<\eps\},\quad \Omega_\eps:=\{x\in\Omega\;:\;\text{dist}(x,\partial\Omega)>\eps\}.
\]
By $\rho_\eps\ast u$ we abbreviate the convolution of a function $u\in L^1_{\text{loc}}(\Omega)$ with a symmetric mollifier $\rho_\eps\in C_0^\infty(\R^n)$, which is supported in the closure of the ball $B_\eps(0)$. $\mathcal{H}^s$, $s>0$ designates the $s$-dimensional Hausdorff measure and
by $L^p(\Omega)$, $1\leq p<\infty$ we mean the space of (real-valued) functions which are $p$-integrable w.r.t. the $n$-dimensional Lebesgue measure $\mathcal{L}^n$, normed in the usual way by $\|\,\cdot\,\|_{p;\Omega}$. Further, $W^{m,p}(\Omega)$, $m\in\N$, designates the Sobolev space of (real-valued) functions whose distributional derivatives up to order $m$ are represented by $p$-integrable functions, endowed with the norm
\[ 
\|u\|_{m,p;\Omega}:=\sum_{\substack{\nu\in \N_0^n\\|\nu|\leq m}}\|\partial^\nu u\|_{p;\Omega}.
\]
The notion $\nabla^ku$ means the $k$-th iterated (distributional) gradient of a function $u$, i.e. the $k$-th order symmetric tensor-valued function with components $(\nabla^ku)_{i_1,...,i_k}=\partial_{i_1}\cdots\partial_{i_k} u$, $i_1,...,i_k\in\{1,...,n\}$. $S^k(\R^n)$ denotes the set of all symmetric tensors of order $k$ with real components, which is naturally isomorphic to the set of all $k$-linear symmetric maps $(\R^n)^k\rightarrow\R$.\\
 We  declare by 
\[
BV^m(\Omega):=\big\{u\in W^{m-1,1}(\Omega)\,:\,\nabla^{m-1}u\in BV(\Omega, S^{m-1}(\R))\big\}
\]
 the space of (real valued) functions of $m$-th order bounded variation, i.e. the set of all functions, whose distributional gradients up to order $m-1$ are  represented through 1-integrable tensor-valued functions and whose $m$-th distributional gradient is  a tensor-valued Radon measure of finite total variation 
\[
 |\nabla^mu|(\Omega)=\sup\Bigg\{\intop_\Omega u\Big(\sum_{{\nu\in\N_0^n,\,|\nu|=m}}\partial^\nu g_\nu\Big)dx\;:\;g\in C^m_0(\Omega,\R^M),\,\|g\|_\infty\leq 1\Bigg\}
\]
with $M:=\#\{\nu\in\N_0^n\;:\;|\nu|:=\nu_1+...+\nu_n=m\}$. Together with the norm
\[ 
\|u\|_{BV^m(\Omega)}:=\|u\|_{m-1,1;\Omega}+|\nabla^mu|(\Omega), 
\]
$BV^m(\Omega)$ becomes a Banach space. 

Spaces of this kind have been studied (in an even more general setting) in \cite{DT} and just like there, we will provide  $BV^m(\Omega)$ with another topology apart from the norm topology, induced by the following distance:\\
For $u,v\in BV^m(\Omega)$ we set
\begin{align*}
d_f(u&,v):=\\
&\|u-v\|_{m-1,1;\Omega}+\Bigl||\nabla^mu|(\Omega)-|\nabla^mv|(\Omega)\Bigr|+\Bigl|f(\nabla^mu)(\Omega)-f(\nabla^mv)(\Omega)\Bigr|
\end{align*}
where $f(x)=\sqrt{1+|x|^2}$ for $x\in\R^M$.
Then convergence with respect to this distance refines strict $BV$-convergence (see \cite{AFP}, Definition 3.14) and $C^\infty(\Omega)$  is a dense subspace of $\big(BV^m(\Omega),d_f(.,.)\big)$ (see \cite{DT}, Theorem 2.2).\\
To simplify matters, all of our results are formulated in terms of real valued functions and extend component-wise to the vector-valued case.

\subsection{Some auxiliary results on Sobolev functions}
\label{sec:3}
\begin{Prop}\label{Pr1}
Let $\Omega\subset\R^n$ be open and bounded with Lipschitz boundary and $u\in W^{m,p}(\Omega)$. With $T:W^{1,p}(\Omega)\rightarrow L^p(\partial \Omega,\mathcal{H}^{n-1})$ denoting the boundary operator for real-valued Sobolev  functions, we have that for any $\eps>0$ given, there is a smooth function $\varphi\in C^\infty(\Omega)\cap W^{m,p}(\Omega)$ with
\[
\|u-\varphi\|_{m,p;\Omega}<\eps
\]
and such that $T\nabla^ku=T\nabla^k\varphi$ for $0\leq k\leq m-1$, where the action of $T$ on a tensor-valued function is component-wise.
\end{Prop}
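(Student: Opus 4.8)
The plan is to decouple the two demands — smoothness of $\varphi$, and \emph{exact} matching of the traces $T\nabla^ku$ for $k\le m-1$. First I would reduce the statement to producing a \emph{single} smooth comparison function with the right traces, and then build that function by a Friedrichs mollification whose radius degenerates at $\partial\Omega$.

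\emph{Reduction.}
For $0\le k\le m-1$ the operator $\nabla^k$ maps $W^{m,p}(\Omega)$ continuously into $W^{1,p}(\Omega)$, so $T\nabla^k\colon W^{m,p}(\Omega)\to L^p(\partial\Omega,\mathcal H^{n-1})$ is continuous; since it vanishes on $C_0^\infty(\Omega)$, it vanishes on $W_0^{m,p}(\Omega)$. Hence it suffices to produce \emph{one} function $\varphi_0\in C^\infty(\Omega)\cap W^{m,p}(\Omega)$ with $\varphi_0-u\in W_0^{m,p}(\Omega)$: then $\varphi_0+C_0^\infty(\Omega)$ is dense in the affine space $\varphi_0+W_0^{m,p}(\Omega)$, which contains $u$; every $\varphi_0+\eta$ with $\eta\in C_0^\infty(\Omega)$ lies in $C^\infty(\Omega)\cap W^{m,p}(\Omega)$ and satisfies $T\nabla^k(\varphi_0+\eta)=T\nabla^k\varphi_0=T\nabla^ku$ for $0\le k\le m-1$; and one picks $\eta$ with $\|u-(\varphi_0+\eta)\|_{m,p;\Omega}<\eps$.

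\emph{Construction of $\varphi_0$.}
Fix a mollifier $\rho\in C_0^\infty(B_1(0))$ with $\int_{\R^n}z^\gamma\rho(z)\,dz$ equal to $1$ for $\gamma=0$ and to $0$ for $1\le|\gamma|\le m-1$ (vanishing moments up to order $m-1$; such $\rho$ is of course no longer nonnegative when $m\ge2$), and a regularised distance $d\in C^\infty(\Omega)$ in the sense of \cite{St}, i.e. $c_1\,\text{dist}(x,\partial\Omega)\le d(x)\le\text{dist}(x,\partial\Omega)$ and $|\nabla^jd(x)|\le C_j\,\text{dist}(x,\partial\Omega)^{1-j}$. With $\delta(x):=\kappa\,d(x)$ and $\kappa>0$ so small that $\overline{B_{3\delta(x)}(x)}\subset\Omega$ for every $x\in\Omega$, set
\[
\varphi_0(x):=\bigl(\rho_{\delta(x)}\ast u\bigr)(x)=\int_{\R^n}\rho(z)\,u\bigl(x-\delta(x)z\bigr)\,dz ,\qquad x\in\Omega .
\]
Since $F(y,t):=(\rho_t\ast u)(y)$ is of class $C^\infty$ on $\{(y,t):\overline{B_t(y)}\subset\Omega\}$ and $\delta\in C^\infty(\Omega)$ is strictly positive, $\varphi_0=F(\cdot,\delta(\cdot))\in C^\infty(\Omega)$.

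\emph{The estimates, and the main obstacle.}
It remains to show $\varphi_0\in W^{m,p}(\Omega)$ and $\varphi_0-u\in W_0^{m,p}(\Omega)$; this is the heart of the matter, and both follow from the weighted bound
\[
\bigl\|\,\text{dist}(\cdot,\partial\Omega)^{-l}\,\nabla^{m-l}(\varphi_0-u)\bigr\|_{L^p(\Omega)}\le C\,\|\nabla^mu\|_{L^p(\Omega)},\qquad 0\le l\le m .
\]
To prove it I would expand $\nabla^{m-l}\varphi_0$ by the chain rule: each term is $\partial_y^a\partial_t^bF$, evaluated at $(x,\delta(x))$, times $\nabla^{\gamma_1}\delta(x)\cdots\nabla^{\gamma_b}\delta(x)$, with $|a|+\sum_i|\gamma_i|=m-l$. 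Each $t$-differentiation of $\rho_t$ produces a factor $t^{-1}$ and a kernel that again has vanishing moments up to order $m-1$, so every convolution occurring in $\partial_y^a\partial_t^bF$ may be read against $u$ minus an averaged Taylor polynomial, whence the Bramble--Hilbert (Poincaré) inequality recovers a factor $t^{\,m-|a|}$; combined with $|\nabla^{\gamma_i}\delta(x)|\le C\,\text{dist}(x,\partial\Omega)^{1-|\gamma_i|}\le C'\delta(x)^{1-|\gamma_i|}$, the powers of $\delta(x)\simeq\text{dist}(x,\partial\Omega)$ cancel exactly and the term is bounded by $C\,\text{dist}(x,\partial\Omega)^{\,l-n/p}\,\|\nabla^mu\|_{L^p(B_{C\delta(x)}(x))}$ (for the $b=0$ contribution one first groups it with $-\nabla^{m-l}u$ and applies the Taylor trick to the difference). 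Raising to the $p$-th power, integrating in $x$ and using Fubini — on $\{x:|x-y|<C\delta(x)\}$ one has $\delta(x)\simeq\text{dist}(y,\partial\Omega)$ and this set has measure $\le C\,\text{dist}(y,\partial\Omega)^n$ — yields the displayed inequality for every $p\in[1,\infty)$; the same computation shows the lower-order derivatives of $\varphi_0$ are in $L^p$, so $\varphi_0\in W^{m,p}(\Omega)$. Finally, choosing cut-offs $\zeta_j\in C_0^\infty(\Omega)$ with $\zeta_j\equiv1$ on $\Omega_{2/j}$, $\text{supp}\,\zeta_j\subset\Omega_{1/j}$ and $|\nabla^l\zeta_j|\le C\,j^l$, Leibniz' rule, the weighted bound (note $j^l\le C\,\text{dist}(\cdot,\partial\Omega)^{-l}$ on $\text{supp}\,\nabla\zeta_j$) and dominated convergence give $\zeta_j(\varphi_0-u)\to\varphi_0-u$ in $W^{m,p}(\Omega)$, while each $\zeta_j(\varphi_0-u)$ has compact support in $\Omega$ and hence lies in the closed subspace $W_0^{m,p}(\Omega)$; so $\varphi_0-u\in W_0^{m,p}(\Omega)$, and the reduction step finishes the proof. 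The genuinely delicate point is this last estimate: one must tune the mollification so that $\varphi_0-u$ vanishes at $\partial\Omega$ to exactly the order permitted by Hardy's inequality, which is what forces the vanishing-moment choice of $\rho$ and the bookkeeping showing that the unbounded derivatives of the regularised distance are precisely absorbed by the $t^{-1}$-factors from the $t$-derivatives of $\rho_t$; everything else is soft.
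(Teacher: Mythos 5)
Your argument is correct in its architecture, but it takes a genuinely different — and considerably heavier — route than the paper. The paper stays entirely inside the Meyers--Serrin construction: it builds $\varphi=\sum_j\rho_{\eps_j}\ast(\eta_ju)$ as usual and then observes that $\|\nabla^ku-\nabla^k\varphi\|_{1,p;\Omega-\overline{\Omega_j}}$ is controlled by the tail $\sum_{l\geq j}\|\eta_lu-\rho_{\eps_l}\ast(\eta_lu)\|_{m,p;\Omega}$; by calibrating $\eps_l$ against the norms $c_l$ of the trace operators on the collars $\Omega-\overline{\Omega_j}$, the trace of $\nabla^k(u-\varphi)$ is bounded by $2^{1-j}$ for every $j$ and hence vanishes. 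That proof is short, needs no extension or representation theory, and produces the trace equality as a limit of quantitative estimates. You instead reduce the problem to exhibiting one $\varphi_0\in C^\infty(\Omega)\cap W^{m,p}(\Omega)$ with $u-\varphi_0\in W_0^{m,p}(\Omega)$ (a correct reduction, since $T\nabla^k$ is continuous on $W^{m,p}(\Omega)$ and kills $W_0^{m,p}(\Omega)$), and you build $\varphi_0$ by a variable-step mollification with a vanishing-moment kernel and a regularised distance — this is essentially Burenkov's approximation theorem, and the weighted estimate $\|\mathrm{dist}(\cdot,\partial\Omega)^{-l}\nabla^{m-l}(\varphi_0-u)\|_{p;\Omega}\leq C\|\nabla^mu\|_{p;\Omega}$ together with your cut-off argument does yield $u-\varphi_0\in W_0^{m,p}(\Omega)$ without invoking Hardy's inequality (which is important, since $p=1$ is allowed). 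What your approach buys is strictly more than the Proposition asks: a \emph{linear} construction, valid on arbitrary open sets, with quantitative decay of $u-\varphi_0$ at $\partial\Omega$. What it costs is the hardest step, which you only sketch: in the $b=0$ part of the weighted bound, after subtracting the averaged Taylor polynomial you must control $\mathrm{dist}(x,\partial\Omega)^{-l}\bigl(\nabla^{m-l}u-Q_x\bigr)(x)$ \emph{evaluated at the centre point} $x$ in $L^p$, and the ball-averaged Bramble--Hilbert inequality alone does not give a pointwise value; one needs the Sobolev integral representation $v(x)-Q_Bv(x)=\int_BK(x,y)\nabla^lv(y)\,dy$ with $|K(x,y)|\leq C|x-y|^{l-n}$ and a Young/Schur estimate on the ball. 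That detail is standard and fixable, so I regard your proof as correct modulo routine (if laborious) bookkeeping; but be aware that the paper's trace-calibration trick avoids all of it.
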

\begin{proof}
Exhaust $\Omega$ with open sets as given by
\[
\Omega_j:=\{x\in\Omega\;:\;\mbox{dist}(x,\partial \Omega)>1/j\},\quad j=1,2,...
\]
and consider the open covering of $\Omega$  through
\[
 A_1:=\Omega_2,\quad A_j:=\Omega_{j+1}-\overline{\Omega_{j-1}},\quad j=2,3,...
\]
Let $(\eta_j)_{j=1}^{\infty}$ be a partition of unity with respect to the covering $(A_j)_{j=1}^{\infty}$ and take a sequence $(\eps_j)_{j=1}^{\infty}$ of positive reals s.t. $(\mbox{spt }\eta_j)^{\eps_j}\Subset A_j$ and 
\[
\|\eta_ju-\rho_{\eps_j}\ast (\eta_ju)\|_{m,p;\Omega}<\eps/2^j.
\]
It is obvious  that $\varphi:=\sum_{j=1}^\infty \rho_{\eps_j}\ast (\eta_ju)$ is a smooth function which approximates $u$ in the right manner.\\
Now let $T_j:W^{1,p}(\Omega-\overline{\Omega_j})\rightarrow L^p(\partial \Omega)$ denote the trace operator on $W^{1,p}(\Omega-\overline{\Omega_j})$. Note that $T_ju_{|\Omega-\overline{\Omega_j}}=Tu$ whenever $u\in W^{1,p}(\Omega)$. Furthermore, since the trace operators are continuous, there are positive constants $c_j$ s.t.
\begin{equation}
 \|T_ju\|_{p;\partial\Omega}\leq c_j\|u\|_{1,p;\Omega-\overline{\Omega_j}}. \tag{1}
\end{equation}
With
\[
 a_j:=\frac{1}{\max\{c_i\;:\;i\leq j\}},
\]
we can choose $\eps_j$ small enough such that
\[
\|\eta_ju-\rho_{\eps_j}\ast (\eta_ju)\|_{m,p;\Omega}<a_j/2^j.
\]
Now let $0\leq k\leq m-1$. Thus $\nabla^ku\in W^{1,p}(\Omega,S^{k}(\R))$ and by (1) we have
\[
\|T\nabla^ku-T\nabla^k\varphi\|_{p;\partial\Omega}=\|T_j\nabla^ku_{|\Omega-\overline{\Omega_j}}-T_j\nabla^k\varphi_{|\Omega-\overline{\Omega_j}}\|_{p;\partial\Omega}
\]
\[
\leq c_j\|\nabla^ku-\nabla^k\varphi\|_{1,p;\Omega-\overline{\Omega_j}}
\]
\[
\leq c_j\sum_{l\geq j}\|\eta_lu-\rho_{\eps_l}\ast (\eta_lu)\|_{m,p;\Omega}<c_j\sum_{l\geq j}\frac{a_l}{2 ^l}\leq \frac{1}{2^{j-1}}.
\]

Since this holds for any $j\in\N$, the result follows.
\end{proof}

\begin{Prop}\label{Prop2}
 Let $\Omega\subset\R^n$ be open and $u\in W^{m,p}(\Omega)\cap L^q(\Omega)$. Then, for any $\eps>0$ given, there is a smooth function $\varphi\in C^\infty(\Omega)\cap W^{m,p}(\Omega)$ satisfying
\[
 \|u-\varphi\|_{m,p;\Omega}+\|u-\varphi\|_{q;\Omega}<\eps.
\]

\end{Prop}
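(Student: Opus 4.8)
The plan is to imitate the partition-of-unity-and-mollify construction from the proof of Proposition \ref{Pr1}, but now keeping track of the $L^q$-norm in addition to the $W^{m,p}$-norm. First I would exhaust $\Omega$ by the open sets $\Omega_j:=\{x\in\Omega:\operatorname{dist}(x,\partial\Omega)>1/j\}$ (together with $\Omega_j:=\{x\in\Omega:|x|<j\}$ intersected in, in case $\Omega$ is unbounded, so that the $\Omega_j$ are precompact in $\Omega$) and form the locally finite open cover $A_1:=\Omega_2$, $A_j:=\Omega_{j+1}\setminus\overline{\Omega_{j-1}}$ for $j\geq 2$. Pick a subordinate partition of unity $(\eta_j)_{j=1}^\infty$, $\eta_j\in C_0^\infty(A_j)$, $\sum_j\eta_j\equiv 1$ on $\Omega$.

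Next, for each $j$ I would choose $\varepsilon_j>0$ small enough that $(\operatorname{spt}\eta_j)^{\varepsilon_j}\Subset A_j$ and simultaneously
\[
\|\eta_j u-\rho_{\varepsilon_j}\ast(\eta_j u)\|_{m,p;\Omega}<\varepsilon/2^{j+1}
\quad\text{and}\quad
\|\eta_j u-\rho_{\varepsilon_j}\ast(\eta_j u)\|_{q;\Omega}<\varepsilon/2^{j+1}.
\]
This is possible because $\eta_j u\in W^{m,p}(\Omega)\cap L^q(\Omega)$ (products of a compactly supported smooth function with a $W^{m,p}\cap L^q$ function stay in $W^{m,p}\cap L^q$ by the Leibniz rule and boundedness of $\eta_j$ and its derivatives), and mollification converges to the identity in both $W^{m,p}$ and $L^q$ for $1\le p,q<\infty$. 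Then set $\varphi:=\sum_{j=1}^\infty\rho_{\varepsilon_j}\ast(\eta_j u)$. Because the cover $(A_j)$ is locally finite and $\operatorname{spt}\big(\rho_{\varepsilon_j}\ast(\eta_j u)\big)\subset(\operatorname{spt}\eta_j)^{\varepsilon_j}\Subset A_j$, the sum is locally finite, hence $\varphi\in C^\infty(\Omega)$.

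Finally I would estimate the two norms of $u-\varphi=\sum_j\big(\eta_j u-\rho_{\varepsilon_j}\ast(\eta_j u)\big)$. On any fixed precompact $\Omega'\Subset\Omega$ only finitely many summands are nonzero, so the triangle inequality gives $\|u-\varphi\|_{m,p;\Omega'}+\|u-\varphi\|_{q;\Omega'}<\varepsilon$ uniformly in $\Omega'$; letting $\Omega'\uparrow\Omega$ (monotone convergence / Fatou for the $L^p$, $L^q$ integrands and their derivatives) yields $\|u-\varphi\|_{m,p;\Omega}+\|u-\varphi\|_{q;\Omega}\le\varepsilon$, and in particular $\varphi\in W^{m,p}(\Omega)$. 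The only mild subtlety — the "main obstacle", such as it is — is that the partial sums are not globally finite, so one must pass to the limit carefully on an exhaustion rather than just summing a geometric series on all of $\Omega$ at once; but since each term is controlled in both norms by $\varepsilon/2^{j+1}$, the bound survives the limit. No boundary regularity of $\Omega$ is needed here, which is why the statement assumes merely that $\Omega$ is open.
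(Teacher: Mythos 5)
Your proposal is correct and is essentially the paper's own argument: the paper simply reruns the Meyers--Serrin construction from Proposition \ref{Pr1} and observes that mollification also converges in $L^q$, which is exactly what you carry out in detail (your extra care for unbounded $\Omega$ and the exhaustion step are sensible but routine). No substantive difference.
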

\begin{proof}
 If we construct $\varphi$ in the same manner as in the prove of Proposition \ref{Pr1}, it follows trivially from the properties of mollification (see e.g. \cite{Ad}, Theorem 2.29)  that $\varphi$ approximates $u$ in $L^q(\Omega)$.
\end{proof}

\begin{Prop}\label{Lp}
Let $u\in L^p(\R^n)$. For $\alpha>1$ define $u_\alpha(x):=u(\alpha x)$. Then $u_\alpha\rightarrow u$ in $L^p(\Omega)$ for any sequence $\alpha\downarrow 1$ and any measurable set $\Omega\subset\R^n$.
\end{Prop}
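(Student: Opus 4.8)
The plan is to reduce everything to the case $\Omega=\R^n$ and then run a standard density argument based on the continuity of dilations on $L^p$. First note that for any measurable $\Omega\subset\R^n$ we trivially have $\|u_\alpha-u\|_{p;\Omega}\leq\|u_\alpha-u\|_{p;\R^n}$, so it suffices to prove $u_\alpha\to u$ in $L^p(\R^n)$ as $\alpha\downarrow 1$; and since this has to hold along every sequence $\alpha\downarrow 1$, it is equivalent to $\lim_{\alpha\to 1^+}\|u_\alpha-u\|_{p;\R^n}=0$.

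The key preliminary observation is the scaling identity: the change of variables $y=\alpha x$ gives $\|u_\alpha\|_{p;\R^n}=\alpha^{-n/p}\|u\|_{p;\R^n}$ for every $\alpha>0$. In particular the dilation operators $u\mapsto u_\alpha$ are uniformly bounded on $L^p(\R^n)$ for $\alpha$ ranging in, say, $[1,2]$, with operator norm $\leq 1$. This is exactly what is needed to make an $\varepsilon/3$-argument close.

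Now fix $\varepsilon>0$. By density of $C_c^0(\R^n)$ (equivalently $C_0^\infty(\R^n)$) in $L^p(\R^n)$, choose $v\in C_c^0(\R^n)$ with $\|u-v\|_{p;\R^n}<\varepsilon$. For such $v$ the family $\{v_\alpha:1\leq\alpha\leq 2\}$ is supported in one fixed compact set $K$ (namely $K=\overline{B_R(0)}$ if $\operatorname{spt}v\subset\overline{B_{R}(0)}$, using $\alpha\geq1$), and $v_\alpha\to v$ uniformly on $\R^n$ as $\alpha\to 1^+$ by the uniform continuity of $v$. Hence
\[
\|v_\alpha-v\|_{p;\R^n}\leq \big(\mathcal{L}^n(K)\big)^{1/p}\,\|v_\alpha-v\|_{\infty;\R^n}\xrightarrow[\alpha\to 1^+]{}0 .
\]
Finally, by the triangle inequality and the scaling identity,
\[
\|u_\alpha-u\|_{p;\R^n}\leq \|u_\alpha-v_\alpha\|_{p;\R^n}+\|v_\alpha-v\|_{p;\R^n}+\|v-u\|_{p;\R^n}
=\alpha^{-n/p}\|u-v\|_{p;\R^n}+\|v_\alpha-v\|_{p;\R^n}+\|v-u\|_{p;\R^n},
\]
so for $\alpha\in[1,2]$ the right-hand side is $\leq 2\varepsilon+\|v_\alpha-v\|_{p;\R^n}$, whence $\limsup_{\alpha\to1^+}\|u_\alpha-u\|_{p;\R^n}\leq 2\varepsilon$. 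Letting $\varepsilon\downarrow0$ finishes the proof.

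I do not expect a genuine obstacle here; the only point that needs a little care is securing the \emph{uniform} bound on the dilation operators (via the scaling identity) so that the approximation error $\|u-v\|_{p}$ is not amplified when one passes from $u-v$ to $u_\alpha-v_\alpha$, and making sure the supports of the $v_\alpha$ stay inside a common compact set, which is automatic because $\alpha\geq 1$.
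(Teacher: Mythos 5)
Your proof is correct and follows essentially the same route as the paper: reduce to $\Omega=\R^n$, approximate $u$ by a compactly supported continuous (or smooth) function, use uniform convergence of the dilates of the approximant, and close with an $\varepsilon/3$ argument. You merely make explicit two points the paper leaves implicit, namely the uniform bound $\|w_\alpha\|_{p;\R^n}=\alpha^{-n/p}\|w\|_{p;\R^n}\leq\|w\|_{p;\R^n}$ for $\alpha\geq 1$ and the containment of the supports of the $v_\alpha$ in a fixed compact set.
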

\begin{proof} W.l.o.g. we assume $\Omega=\R^n$.
The set of smooth functions with compact support $C^\infty_0(\R^n)$ is dense in $L^p(\R^n)$. Thus, we can choose a sequence  $(\varphi_k)_{k\in\N}\subset C^\infty_0(\R^n)$ converging to $u$. Then $\varphi_k(\alpha x)$ approximates $u_\alpha$ in $L^p(\R^n)$ and the result follows since $\varphi_k(\alpha x)\rightarrow\varphi_k(x)$ converges uniformly for $\alpha\downarrow 1$ and  $k$ fixed.
\end{proof}

The following extension result will be a key tool towards proving approximation theorems in both $W^{m,p}(\Omega)\cap L^q(\Omega-D)$ and $BV^m(\Omega)\cap L^q(\Omega-D)$:
\begin{Prop} \label{Ext}
Let $\Omega\subset\mathbb{R}^n$ be open and bounded with minimally smooth boundary and $u\in W^{m,p}(\Omega)\cap L^q(\Omega)$. Then there is a continuous linear operator
$\mathfrak{E}$, mapping $u$ to a function $\tilde{u}\in W^{m,p}(\mathbb{R}^n)\cap L^q(\mathbb{R}^n)$ and such that $\tilde{u}=u$ (a.e.) on $\Omega$.
\end{Prop}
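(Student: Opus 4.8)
The plan is to reduce the statement to E.~M.~Stein's extension theorem, the essential point being that Stein's operator respects the additional integrability for free. Recall (cf.\ \cite{St}, Chapter VI) that for a domain $\Omega$ with minimally smooth boundary there is a \emph{single} linear operator $\mathfrak{E}$, depending neither on the order of differentiability nor on the Lebesgue exponent, such that $\mathfrak{E}v = v$ a.e.\ on $\Omega$ and
\[
\|\mathfrak{E}v\|_{W^{k,s}(\R^n)}\le C(k,s,n,\Omega)\,\|v\|_{W^{k,s}(\Omega)}\qquad\text{for every }k\in\N_0\text{ and every }1\le s\le\infty .
\]
Moreover, since $\Omega$ is bounded, $\mathfrak{E}$ is assembled from a finite partition of unity (an interior term together with boundary patches which are extended by reflections displacing points only a bounded amount), so $\mathfrak{E}v$ is supported in a fixed bounded neighbourhood of $\overline\Omega$; should one prefer, one may pass to $v\mapsto\chi\,\mathfrak{E}v$ with a cutoff $\chi\in C_0^\infty(\R^n)$, $\chi\equiv 1$ on $\overline\Omega$, which affects none of the above and makes the compact support manifest.

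First I would put $\tilde u := \mathfrak{E}u$. Applying the displayed inequality with $k=m$, $s=p$ gives $\tilde u\in W^{m,p}(\R^n)$ with $\|\tilde u\|_{m,p;\R^n}\le C\|u\|_{m,p;\Omega}$; applying it with $k=0$, $s=q$ (so that $W^{0,q}=L^q$) and using the compact support of $\tilde u$ gives $\tilde u\in L^q(\R^n)$ with $\|\tilde u\|_{q;\R^n}\le C\|u\|_{q;\Omega}$. Since these are two bounds for one and the same function, $\tilde u$ lies in $W^{m,p}(\R^n)\cap L^q(\R^n)$, the assignment $u\mapsto\tilde u$ is linear, and it is continuous from $W^{m,p}(\Omega)\cap L^q(\Omega)$, normed by $\|\cdot\|_{m,p;\Omega}+\|\cdot\|_{q;\Omega}$, into $W^{m,p}(\R^n)\cap L^q(\R^n)$. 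The relation $\tilde u = u$ a.e.\ on $\Omega$ is part of the defining property of $\mathfrak{E}$, so nothing remains to be checked.

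There is no genuine obstacle here; the content of the proposition is simply the remark that the \emph{same} extension has to serve the whole Sobolev scale and the space $L^q$ at once. An extension operator tailored to $W^{m,p}$ alone (for instance one built from an $m$-fold reflection $v(x',x_n)\mapsto\sum_{j}c_j\,v(x',-j x_n)$ with coefficients $c_j$ matched to derivatives up to order $m-1$) gives a priori no control of the $L^q$ norm without a separate inspection of its kernel, which is why the universality of Stein's construction is precisely what one wants to invoke. A self-contained alternative, avoiding the citation of \cite{St}, is to carry out the reflection construction by hand and to observe that each local reflection $v(x',x_n)\mapsto\sum_{j}c_j\,v(x',-j x_n)$, or a mollified variant thereof, is visibly bounded both on $W^{m,p}$ and on $L^q$ of the relevant half-neighbourhood, after which the partition of unity glues the pieces into the desired $\mathfrak{E}$; in either case the only real input is the boundary geometry encoded in the notion of a minimally smooth boundary.
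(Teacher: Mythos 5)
Your proposal is correct and follows exactly the paper's own argument: both rest on the universality of Stein's extension operator from \cite{St} (pp.\ 189--192), which extends all orders of differentiability and all exponents — in particular $L^q=W^{0,q}$ — by one and the same construction. Your write-up merely spells out the $k=0$, $s=q$ instance and the (optional) cutoff more explicitly than the paper does.
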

\begin{proof}
We claim, that the operator $\mathfrak{E}:W^{m,p}(\Omega)\rightarrow W^{m,p}(\R^n)$, as defined in part 3.3, pp. 189-192 of \cite{St} performs an extension in the right manner. Indeed, this is a mere consequence of the universality of this operator in the sense that it simultaneously extends all orders of differentiability by the same construction.
\end{proof}

\section{Proof of Theorem \ref{Res1}}
\label{sec:4}

We start by proving another version of Theorem \ref{Res1} under stronger assumptions on the geometry of $\Omega$ and $D$ in order to clarify the main idea and then  apply similar arguments to a more general setting. 

\begin{Lem}\label{Wkp}
Let $\Omega\subset\mathbb{R}^n$ be open and bounded  with minimally smooth boundary, $D\Subset\Omega$ an  open and precompact subset with Lipschitz boundary which is star-shaped with respect to a point $x_0\in D$ and $u\in W^{m,p}(\Omega)\cap L^q(\Omega-D)$. Given an arbitrary $\eps>0$, there is a function $\varphi\in C^\infty(\overline{\Omega})$ s.t.
\[
 \parallel u-\varphi \parallel_{m,p;\Omega}+\parallel u-\varphi \parallel_{q;\Omega-D}<\eps.
\]  
\end{Lem}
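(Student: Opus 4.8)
The plan is to reduce the statement to the familiar ``extend and mollify'' scheme. The obstruction is that $u$ need not be $q$‑integrable on all of $\Omega$, so a mollification of $u$ — or of an $L^q$‑preserving extension of $u|_{\Omega-\overline D}$ — either fails to be $q$‑integrable near $\partial D$ or ceases to approximate $u$ there. I would get around this by manufacturing a function $\tilde u$ on $\R^n$ which agrees with $u$ on \emph{all} of $\Omega$ but is $q$‑integrable off $\overline D$, and then pre‑composing $\tilde u$ with a dilation centred at $x_0$ that pushes the bad set $\overline D$ strictly into the interior of $D$.

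After translating so that $x_0=0$, set $U:=\Omega-\overline D$, which is a bounded open set with minimally smooth boundary (its boundary splits into the separated pieces $\partial\Omega$ and $\partial D$ because $\overline D\Subset\Omega$). Since $D$ lies at positive distance from $\partial\Omega$, the restriction $u|_U$ is in $W^{m,p}(U)\cap L^q(U)$, so Proposition \ref{Ext}, applied to the domain $U$, yields $v:=\mathfrak E(u|_U)\in W^{m,p}(\R^n)\cap L^q(\R^n)$ with $v=u$ a.e.\ on $U$. The open sets $\Omega$ and $\R^n-\overline D$ cover $\R^n$ (as $\overline D\Subset\Omega$) and $u$, $v$ coincide on the overlap $U$; hence the function $\tilde u$ equal to $u$ on $\Omega$ and to $v$ on $\R^n-\overline D$ is well defined and lies in $W^{m,p}(\R^n)$ — this is a plain patching of two Sobolev functions that already agree where they overlap, so no cut‑off and no iterated chain rule enters. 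By construction $\tilde u=u$ on $\Omega$ and $\tilde u=v\in L^q$ on $\R^n-\overline D$.

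For the blow‑up, put $\tilde u_\alpha(x):=\tilde u(\alpha x)$ for $\alpha>1$. Since the closure of a set that is star‑shaped with respect to $0$ is again star‑shaped with respect to $0$, one has $x\notin\overline D\Rightarrow\alpha x\notin\overline D$; and since $D$ is star‑shaped with Lipschitz boundary, $\overline D$ is the region under a radial Lipschitz graph, whence $\alpha^{-1}\overline D\Subset D$ for every $\alpha>1$. The first property together with $\tilde u\in L^q(\R^n-\overline D)$ gives $\tilde u_\alpha\in L^q(\R^n-\alpha^{-1}\overline D)$. From $\partial^\nu\tilde u_\alpha(x)=\alpha^{|\nu|}(\partial^\nu\tilde u)(\alpha x)$ for $|\nu|\le m$ and Proposition \ref{Lp}, applied to $\partial^\nu\tilde u\in L^p(\R^n)$, I get $\tilde u_\alpha\to\tilde u=u$ in $W^{m,p}(\Omega)$ as $\alpha\downarrow1$; and, writing $\bar u\in L^q(\R^n)$ for the zero‑extension of $\tilde u|_{\R^n-\overline D}$, since $\bar u=u$ a.e.\ on $\Omega-D$ and $\tilde u_\alpha(x)=\bar u(\alpha x)$ for a.e.\ $x\in\Omega-D$, Proposition \ref{Lp} also gives $\|\tilde u_\alpha-u\|_{q;\Omega-D}=\|\bar u(\alpha\,\cdot\,)-\bar u\|_{q;\Omega-D}\to0$. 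Finally, given $\eps>0$ I would fix $\alpha>1$ with $\|\tilde u_\alpha-u\|_{m,p;\Omega}+\|\tilde u_\alpha-u\|_{q;\Omega-D}<\eps/2$; then the distance $\eta$ between the disjoint compact sets $\alpha^{-1}\overline D$ and $\overline\Omega-D$ is positive, and for $0<\delta<\eta$ the mollification $\varphi:=\rho_\delta\ast\tilde u_\alpha\in C^\infty(\R^n)\subset C^\infty(\overline\Omega)$ satisfies $\varphi\to\tilde u_\alpha$ in $W^{m,p}(\Omega)$ (standard mollification, using $\tilde u_\alpha\in W^{m,p}(\R^n)$ and the boundedness of $\Omega$) and $\varphi\to\tilde u_\alpha$ in $L^q(\Omega-D)$ (because the open $\delta$‑neighbourhood $(\Omega-D)^\delta$ is contained in $\R^n-\alpha^{-1}\overline D$ for $\delta<\eta$, where $\tilde u_\alpha$ is $q$‑integrable); choosing $\delta$ small enough and using the triangle inequality then gives the claim.

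The point requiring the most care is the construction of $\tilde u$: the two demands — reproducing $u$ on all of $\Omega$ (forced by the $W^{m,p}$‑norm over $D$, which is exactly why one cannot simply take an $L^q$‑preserving extension of $u|_{\Omega-\overline D}$, since $u\notin L^q(\Omega)$ in general) and $q$‑integrability on a neighbourhood of $\overline\Omega-D$ (needed so that an ordinary mollification near $\partial D$ does not pick up the non‑integrable part of $u$) — are reconciled by the patching together with the dilation, the latter replacing the offending set $\overline D$ by the strictly smaller $\alpha^{-1}\overline D$ and thereby freeing a collar inside $D$ on which $\tilde u_\alpha$ is as integrable as $u$ is outside $D$. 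Verifying that the patched function genuinely lies in $W^{m,p}(\R^n)$ and that the dilation interacts with the star‑shaped set $D$ as claimed are the two technical points; the behaviour near $\partial\Omega$, by contrast, is absorbed without effort into the extension step.
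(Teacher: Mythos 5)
Your argument is correct and follows essentially the same route as the paper: extend $u$ to a function on $\R^n$ that is $q$-integrable off $\overline D$, dilate about $x_0$ using the star shape so that the non-integrable set shrinks to $\alpha^{-1}\overline D\Subset D$, and then smooth. The two points where you deviate are both simplifications rather than new ideas: you justify the extension step explicitly by patching the Stein extension of $u|_{\Omega-\overline D}$ with $u$ on $\Omega$ (the paper asserts the $L^q(\R^n-D)$ property of the extension directly), and you replace the paper's final gluing of two trace-matched approximations on $\R^n-\overline{D_\alpha}$ and $\overline D$ by a single mollification $\rho_\delta\ast\tilde u_\alpha$ with $\delta<\operatorname{dist}\bigl(\alpha^{-1}\overline D,\overline\Omega-D\bigr)$, which works because $\tilde u_\alpha\in W^{m,p}(\R^n)$ globally and is $q$-integrable on the $\delta$-neighbourhood of $\Omega-D$.
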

\begin{proof} W.l.o.g. we may assume $x_0=0$.\\
Applying Proposition \ref{Ext}, we can extend $u$ outside of $\Omega$ to a function $u'\in W^{m,p}(\R^n)\cap L^q(\R^n-D)$. Then, by Proposition \ref{Lp}, $u'_\alpha(x):=u'(\alpha x)$ converges to $u'$ in $W^{m,p}(\Omega)\cap L^q(\Omega-D)$ for $\alpha\downarrow 1$.
Fix $\alpha>1$ with 
\begin{equation}
\parallel u'-u'_\alpha \parallel_{m,p;\Omega}+\parallel u'-u'_\alpha\parallel_{q;\Omega-D}<\eps/3 \tag{1}
\end{equation}
Due to its star shape, $D_\alpha:=1/\alpha \,D$ is a precompact subset of $D$ and $u'_\alpha$ is $q$-integrable on $\R^n-D_\alpha$. By Proposition \ref{Prop2}, we can construct a smooth function $\varphi'\in C^\infty(\R^n-\overline{D_\alpha})$ with
\begin{equation}
\|u'_\alpha-\varphi'\|_{m,p;\R^n-\overline{D_\alpha}}+\|u'_\alpha-\varphi'\|_{q;\R^n-D_\alpha}<\eps/3 \tag{2}
\end{equation}
and such that $T\nabla^k\varphi'=T\nabla^ku'_\alpha$ in $L^p(\partial D_\alpha,\mathcal{H}^{n-1})$ for every $0\leq k\leq m-1$. Consequently, $\varphi'$ can be extended to $D_\alpha$ by ${u'_\alpha}_{|D_\alpha}$ to a function $v\in W^{m,p}(\Omega)\cap L^q(\Omega-D_\alpha)$. On $\overline{D}$, we can construct a smooth function $\varphi''\in C^\infty(\overline{D})$ with
\begin{equation}
 \|v-\varphi''\|_{m,p;D}<\eps/3 \tag{3}
\end{equation} 
and such that $\partial^\nu\varphi''_{|\partial D}=\partial^\nu\varphi'_{|\partial D}$ for every multi-index $\nu\in\N_0^n$. Therefore, and by (1)-(3)
\begin{equation*}
 \varphi(x):=
 \begin{cases}
  \varphi''(x),\quad &x\in D,\\
  \varphi'(x),\quad &x\in\overline{\Omega}-D
 \end{cases}
\end{equation*}
is a smooth function that approximates $u$ in the right manner.
\end{proof}

\noindent We now come to the \textit{proof of Theorem \ref{Res1}:}

Let $\{x_1,\,x_2,\,x_3,...\}\subset\partial D$ be a dense subset of $\partial D$. For every $i\in\N$ choose an  open ball $B_{r_i}(x_i)$ such that $B_{r_i}(x_i)\cap D$ is Lipschitz-equivalent to $B_1(0)\cap\R^{n-1}\times (-\infty,0]$ via a bi-Lipschitz-map $\phi_i:B_{r_i}(x_i)\rightarrow B_1(0)$ and such that $\inf_ir_i>0$. Let $p_i$ denote the preimage of $(0,...,0,-1)$ with respect to $\phi_i$. W.l.o.g. we can assume $p_i=0$ for $i$ fixed. Note that $B_{r_i}(x_i)$ is star shaped with respect to $p_i$.\\
Now let $\eta_i\in C^\infty_0(B_{r_i}(x_i))$ be a smooth function with $0\leq \eta_i\leq 1$, $\eta_i\equiv 1$ on $B_{r_i/2}(x_i)$. We successively construct a sequence $(u_i)_{i=1}^\infty$ of $W^{m,p}(\Omega)\cap L^q(\Omega-D)$-functions in the following way:\\
For $i=1$, take $\alpha_1>1$ small enough such that $u_1(x):=(\eta_1 u)(\alpha_1 x)+(1-\eta_1(x))u(x)$ fulfills
\[
 \|u-u_1\|_{m,p;\Omega}+\|u-u_1\|_{q;\Omega-D}<\eps/2.
\]
Then (provided $\alpha_1$ is small enough) $u_1$ is $q$-integrable outside a proper subset of $D$, with positive distance from $\partial D$ near $\partial D\cap B_{r_1/4}(x_1)$. In the second step, we find $\alpha_2>1$ for which the function $u_2:=(\eta_2 u_1)(\alpha_2 x)+(1-\eta_2(x))u_1(x)$ satisfies
\[
 \| u_1-u_2\|_{m,p;\Omega}+\| u_1-u_2\|_{q;\Omega-D}<\eps/4.
\]
Then $u_2$ is $q$-integrable outside a proper subset of $D$, with positive distance from $\partial D$ near  $\partial D\cap \big(B_{r_1/4}(x_1)\cup B_{r_2/4}(x_2)\big)$.\\
By continuing this process, we recursively define a sequence $(u_i)$ s.t.
\[
 \|u_{i-1}-u_i\|_{m,p;\Omega}+\| u_{i-1}-u_i\|_{q;\Omega-D}<\eps/2^i
\]
and $u_i$ is  $q$-integrable beyond $\partial D\cap \big(\bigcup_{j=1}^i B_{r_j/4}(x_j)\big)$, i.e. the domain of $q$-integrability is enlarged gradually to the inside of $D$. Since $\partial D$ is compact in $\Omega$, after finitely many steps $N$,  $\bigcup_{i=1} ^NB_{r_{i}/4}$ covers $\partial D$. Then $u_{N}$ is a function with
\[
\|u-u_{N}\|_{m,p;\Omega}+\|u-u_{N}\|_{q;\Omega}<\eps
\]
and that is $q$-integrable outside an inner parallel set of $D$. From this point on, the result  follows by the same arguments as used in the proof of Lemma \ref{Wkp}.
\qed

\section{Proof of Theorem \ref{Res2}}
\label{sec:5}

In this section we are concerned with generalizing our previous results for Sobolev functions towards the spaces $BV^m(\Omega)\cap L^q(\Omega-D)$.

\begin{Def}
In the following, we will keep saying "$\varphi$\textit{ approximates } $u\in BV^m(\Omega)\cap L^q(\Omega-D)$ \textit{ in the sense of }$(\mathcal{A}_\eps)$"  for a given $\eps>0$, if $\varphi$ approximates $u$ with respect to the metric $d_f(.,.)$ as well as in $L^q(\Omega-D)$:
\begin{equation*} 
(\mathcal{A}_\eps)
 \begin{cases}
\|u-\varphi\|_{m-1,1;\Omega}+\|u-\varphi\|_{q,\Omega-D}\\
\,\\
+\bigl||\nabla^mu|(\Omega)-|\nabla^m\varphi|(\Omega)\bigr|\\
\,\\
+\Bigl|\sqrt{1+|\nabla^mu|^2}(\Omega)-\intop_{\Omega}\sqrt{1+|\nabla^m\varphi|^2}dx\Bigr|<\eps.
 \end{cases}
\end{equation*}
\end{Def}

Notice, that corresponding versions of Proposition \ref{Pr1} and  \ref{Prop2} can be proven in the context of $BV^m(\Omega)$:

\begin{Prop}\label{BVmq}
 Let $\Omega\subset\R^n$  have $C^1$-boundary\footnote{The author is not particularly sure to what extent it is necessary to request actual smoothness of the boundary, since Demengel and Temam in \cite{DT} only speak of a 'sufficiently smooth' boundary, but it seems to be adequate to assume it to be once differentiable.} and $u\in BV^m(\Omega)\cap L^q(\Omega)$. Then, for any $\eps>0$ given there is a smooth function $\varphi\in C^\infty(\Omega)\cap BV^m(\Omega)$ satisfying
\[
 d_f(u,\varphi)+\|u-\varphi\|_{q;\Omega}<\eps.
\]
\end{Prop}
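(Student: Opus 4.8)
\textbf{Proof proposal for Proposition \ref{BVmq}.}

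The plan is to mimic the proof of Proposition \ref{Pr1}, replacing the Sobolev norm by the metric $d_f(\cdot,\cdot)$ and carrying the additional $L^q$-term along for free. First I would exhaust $\Omega$ by the open sets $\Omega_j:=\{x\in\Omega:\mathrm{dist}(x,\partial\Omega)>1/j\}$, form the covering $A_1:=\Omega_2$, $A_j:=\Omega_{j+1}-\overline{\Omega_{j-1}}$, pick a subordinate partition of unity $(\eta_j)_{j=1}^\infty$, and set $\varphi:=\sum_{j=1}^\infty\rho_{\eps_j}\ast(\eta_j u)$ with the mollification radii $\eps_j$ chosen small enough that $(\mathrm{spt}\,\eta_j)^{\eps_j}\Subset A_j$. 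This $\varphi$ is locally a finite sum of smooth functions, hence lies in $C^\infty(\Omega)$. The standard mollification estimates (e.g. \cite{Ad}, Theorem 2.29) give convergence in $W^{m-1,1}(\Omega)$ and in $L^q(\Omega)$ once the $\eps_j$ are small enough, so the first two terms of $d_f(u,\varphi)+\|u-\varphi\|_{q;\Omega}$ are handled exactly as before; what remains is to control the two measure-theoretic terms built from $\nabla^m$.

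For those I would invoke the Demengel--Temam theory directly: by \cite{DT}, Theorem 2.2, $C^\infty(\Omega)$ is $d_f$-dense in $BV^m(\Omega)$, and more to the point their proof produces the approximating sequence precisely by a partition-of-unity mollification of the above type. Concretely, one checks that with $\eps_j$ chosen small enough one has simultaneously $\||\nabla^m u|(\Omega)-\int_\Omega|\nabla^m\varphi|\,dx\,|<\eps/3$ and $\,|\,\sqrt{1+|\nabla^m u|^2}(\Omega)-\int_\Omega\sqrt{1+|\nabla^m\varphi|^2}\,dx\,|<\eps/3$; the key structural facts are lower semicontinuity of $\mu\mapsto|\nabla^m\mu|(\Omega)$ and of $\mu\mapsto f(\nabla^m\mu)(\Omega)$ under $L^1$-convergence together with the reverse inequality $\limsup_j\int_\Omega|\nabla^m(\rho_{\eps_j}\ast(\eta_j u))|\,dx\le|\nabla^m u|(\Omega)$ coming from Jensen's inequality applied to the convolution and the finite-overlap property of the covering, and analogously for $f$ (which is convex, so Jensen applies verbatim, and the extra constant $1$ is absorbed because $\sum_j\eta_j\equiv1$). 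Thus choosing each $\eps_j$ below the minimum of the three thresholds required for the $W^{m-1,1}$-term, the $L^q$-term, and the two measure terms yields $d_f(u,\varphi)+\|u-\varphi\|_{q;\Omega}<\eps$.

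The main obstacle is not the $L^q$-piece — that is immediate from mollification — but the bookkeeping needed to show that a single choice of $\eps_j$ makes \emph{all} the terms small at once, in particular that mollifying $\eta_j u$ rather than $u$ does not inflate $\int_\Omega f(\nabla^m\varphi)\,dx$ beyond $f(\nabla^m u)(\Omega)+o(1)$: here the cross terms from differentiating the cutoffs $\eta_j$ must telescope/cancel because $\sum_j\nabla\eta_j=0$ on $\mathrm{spt}\,\eta_j$-overlaps, and one must be a little careful that the singular part of $\nabla^m u$ is captured correctly in the limit. Since this is exactly the computation carried out in \cite{DT}, I would cite it for the $d_f$-part and only spell out that the same radii $\eps_j$ also control $\|u-\varphi\|_{q;\Omega}$, which is the sole new ingredient.
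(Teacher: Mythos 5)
Your proposal follows essentially the same route as the paper: the paper's proof likewise invokes the Meyers--Serrin partition-of-unity mollification from Proposition \ref{Pr1}, cites \cite{DT}, Theorem 2.2 for $d_f$-density of $C^\infty(\Omega)$ in $BV^m(\Omega)$, and notes that the additional $L^q$-constraint is preserved automatically by the properties of mollification. Your additional remarks on Jensen's inequality, lower semicontinuity, and the telescoping of the cutoff terms simply spell out the content of the Demengel--Temam argument that the paper leaves implicit.
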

\begin{proof}
 In \cite{DT}, Theorem 2.2 it is shown, that $C^\infty(\Omega)$ lies dense in $BV^m(\Omega)$ with respect to the distance $d_f(.,.)$. The construction of such a smooth approximation follows basically the same steps as in case of a Sobolev function (i.e. the classical Meyers-Serrin argument (see \cite{MS}) as also seen in the proof of Proposition \ref{Pr1}), and thus it is clear that additional integrability constraints are respected by the approximation thanks to the properties of mollification.
\end{proof}
\begin{Prop}\label{Pr1BV}
Let $\Omega\subset\R^n$ be open and bounded with Lipschitz boundary and $u\in BV^m(\Omega)$. Define $T:W^{1,1}(\Omega)\rightarrow L^1(\partial\Omega,\mathcal{H}^{n-1})$ as in Proposition \ref{Pr1} and let $S:BV(\Omega)\rightarrow L^1(\partial\Omega,\mathcal{H}^{n-1})$ denote the trace operator on $BV(\Omega)$. Then, for any $\eps >0$ there is a smooth function $\varphi\in C^\infty(\Omega)\cap BV^m(\Omega)$ which approximates $u$ in the sense of $(\mathcal{A}_\eps)$ and such that
\[
 T\nabla^ku=T\nabla^k\varphi,\,\text{ for all }0\leq k<m-2\quad\text{and}\quad S\nabla^{m-1}u=S\nabla^{m-1}\varphi
\]
in $L^1(\partial\Omega,\mathcal{H}^{n-1})$ (hold in mind that $T$ and $S$ act component-wise on tensor-valued functions).
\end{Prop}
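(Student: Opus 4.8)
The plan is to run the Meyers--Serrin construction from the proof of Proposition~\ref{Pr1} simultaneously with the $BV^m$-smoothing of \cite{DT} (the one behind Proposition~\ref{BVmq}), and to read off all the required trace identities from the single resulting function. Exhaust $\Omega$ by $\Omega_j=\{x\in\Omega:\operatorname{dist}(x,\partial\Omega)>1/j\}$, cover it by $A_1=\Omega_2$ and $A_j=\Omega_{j+1}-\overline{\Omega_{j-1}}$ ($j\ge2$), fix a subordinate partition of unity $(\eta_j)$, and set $\varphi=\sum_{j\ge1}\rho_{\eps_j}*(\eta_j u)$ with $(\operatorname{spt}\eta_j)^{\eps_j}\Subset A_j$. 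For each $j$ one picks $\eps_j>0$ small enough to meet \emph{finitely many} requirements at once: that $\varphi$ be $d_f$-close to $u$ within the prescribed tolerance (and, if in addition $u\in L^q(\Omega-D)$, also $L^q(\Omega-D)$-close to $u$) --- this is exactly what the construction of \cite{DT}, Theorem~2.2 delivers, and since each of the inequalities below holds for all sufficiently small $\eps_j$, imposing them causes no conflict; that $\|\eta_j u-\rho_{\eps_j}*(\eta_j u)\|_{m-1,1;\Omega}<a_j2^{-j}$; and that $\|G_j-\rho_{\eps_j}*G_j\|_{1;\Omega}<a_j2^{-j}$, where $G_j:=\nabla^m(\eta_j u)-\eta_j\nabla^m u$ is the lower-order Leibniz remainder --- an $L^1$ tensor field obeying the telescoping identity $\sum_jG_j=0$. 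The gauge $(a_j)\downarrow0$ is built from the collar widths $1/j$ and from the norms $c_j$ of the trace maps $W^{1,1}(\Omega-\overline{\Omega_j})\to L^1(\partial\Omega)$; for instance $a_j:=(j\max\{c_i:i\le j\})^{-1}$ does the job, for then both $\max\{c_i:i\le j\}\sum_{l\ge j}a_l2^{-l}\to0$ and $j\sum_{l\ge j}a_l2^{-l}\to0$ as $j\to\infty$. Note, finally, that on $\Omega-\overline{\Omega_j}$ only the indices $l\ge j$ contribute to $\varphi$, to $\sum_l\eta_l$ and to $\sum_lG_l$, because $(\operatorname{spt}\eta_l)^{\eps_l}\subset A_l$ and $A_l$ meets $\Omega-\overline{\Omega_j}$ precisely when $l\ge j$.

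The lower-order traces are then handled as in Proposition~\ref{Pr1} taken with $p=1$. For $0\le k\le m-2$ one has $\nabla^k u\in W^{1,1}(\Omega,S^k(\R))$, and for every $j$,
\[
\|T\nabla^k u-T\nabla^k\varphi\|_{1;\partial\Omega}\le c_j\,\|\nabla^k(u-\varphi)\|_{1,1;\Omega-\overline{\Omega_j}}\le c_j\sum_{l\ge j}\|\eta_l u-\rho_{\eps_l}*(\eta_l u)\|_{m-1,1;\Omega}\le c_j\sum_{l\ge j}a_l2^{-l},
\]
which tends to $0$ as $j\to\infty$; since the left-hand side is independent of $j$, it vanishes.

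The genuinely new ingredient is the top trace $S\nabla^{m-1}u$, since $\nabla^{m-1}u$ is merely of bounded variation. The crude estimate $\|Sv\|_{1;\partial\Omega}\le c_j'\,\|v\|_{BV(\Omega-\overline{\Omega_j})}$, with $c_j'$ the $BV$-trace norm on the shrinking collar, is \emph{not} adequate: the total-variation term $|D\nabla^{m-1}(u-\varphi)|(\Omega-\overline{\Omega_j})=|\nabla^m u-\nabla^m\varphi\,\mathcal L^n|(\Omega-\overline{\Omega_j})$ cannot be driven below the singular mass $|(\nabla^m u)^s|$ of $\nabla^m u$ in the collar (mollification of a singular measure does not converge in total variation), so the blow-up of $c_j'$ as $j\to\infty$ defeats it. Instead I will invoke the sharper trace inequality
\[
\|Sv\|_{1;\partial\Omega}\le C(\Omega)\Bigl(j\,\|v\|_{1;\Omega-\overline{\Omega_j}}+|Dv|(\Omega-\overline{\Omega_j})\Bigr),
\]
valid on Lipschitz domains --- proved by flattening $\partial\Omega$ with a bi-Lipschitz chart and slicing $BV$-functions in the transversal direction --- in which the coefficient $C(\Omega)$ of the seminorm term is \emph{independent of $j$}. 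Applied to $v=\nabla^{m-1}(u-\varphi)$, its first summand is $\le C(\Omega)\,j\sum_{l\ge j}a_l2^{-l}\to0$. For the second, on $\Omega-\overline{\Omega_j}$ one has $\nabla^m u=\sum_{l\ge j}\eta_l\nabla^m u$, whence, using $\nabla^m(\eta_l u)=\eta_l\nabla^m u+G_l$ and $\sum_{l\ge j}G_l=0$ there,
\[
\nabla^m u-\nabla^m\varphi=\sum_{l\ge j}\bigl(\eta_l\nabla^m u-\rho_{\eps_l}*(\eta_l\nabla^m u)\bigr)-\sum_{l\ge j}\bigl(\rho_{\eps_l}*G_l-G_l\bigr);
\]
together with $\|\rho_{\eps_l}*(\eta_l\nabla^m u)\|_{1;\Omega}\le|\eta_l\nabla^m u|(\Omega)=\int_\Omega\eta_l\,d|\nabla^m u|$ and the fact that $0\le\sum_{l\ge j}\eta_l\le1$ is supported in $\Omega-\overline{\Omega_{j-1}}$, this gives
\[
|\nabla^m u-\nabla^m\varphi|(\Omega-\overline{\Omega_j})\le 2\,|\nabla^m u|(\Omega-\overline{\Omega_{j-1}})+\sum_{l\ge j}a_l2^{-l},
\]
which tends to $0$ as $j\to\infty$ because $\nabla^m u$ is a finite measure. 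Hence $\|S\nabla^{m-1}u-S\nabla^{m-1}\varphi\|_{1;\partial\Omega}$ is dominated, for every $j$, by a null sequence, and therefore equals $0$. (Along the way the same estimates show $\nabla^m\varphi\in L^1(\Omega)$, so $\varphi\in C^\infty(\Omega)\cap BV^m(\Omega)$ and every trace above makes sense.)

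The step I expect to be the main obstacle is exactly this last one --- the passage to the $BV$-trace of $\nabla^{m-1}u$. The off-the-shelf continuity of the $BV$-trace operator is too lossy against a shrinking collar, and the fix is to decouple the $L^1$- from the seminorm-contribution in the trace estimate, keeping the seminorm coefficient uniform in the collar width while letting only the (harmless) $L^1$-coefficient degenerate. Everything else --- the Leibniz bookkeeping around $\nabla^m(\eta_j u)$ and the cancellation $\sum_jG_j=0$, the mutual compatibility of the finitely many smallness demands on each $\eps_j$ with the $d_f$- and $L^q$-approximation taken from \cite{DT}, and the inference of an identity from a bound valid for all $j$ --- is routine and parallels Propositions~\ref{Pr1} and \ref{BVmq}. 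As an equivalent alternative to the refined inequality, one may keep the crude trace bound but choose the exhaustion $\Omega_j=\{\operatorname{dist}(\cdot,\partial\Omega)>t_j\}$ with $t_j\downarrow0$ selected, in dependence on $u$, so fast that $c_j'\,|\nabla^m u|(\{\operatorname{dist}(\cdot,\partial\Omega)<t_{j-1}\})\to0$ --- which is feasible precisely because $c_j'=O(1/t_j)$, the same fact in another guise.
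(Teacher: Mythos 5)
Your argument is correct, and for the decisive step it takes a genuinely different route from the paper. The paper's proof is a two-line reduction: it runs the same Meyers--Serrin/partition-of-unity scheme as Proposition~\ref{Pr1} and disposes of the top-order trace by citing the continuity of the $BV$-trace operator $S$ with respect to the metric $d_f(.,.)$ (i.e.\ under strict convergence; \cite{GMS}, Theorem 3, p.~483, see also \cite{Giu}, Theorem 2.11 and Remark 2.12). You instead keep everything quantitative: you correctly observe that the naive transplant of the Proposition~\ref{Pr1} estimate fails at order $m$ because $|\nabla^m u-\nabla^m\varphi|(\Omega-\overline{\Omega_j})$ can never be pushed below the singular mass of $\nabla^m u$ in the collar, and you repair this with the collar trace inequality $\|Sv\|_{1;\partial\Omega}\le C(\Omega)\bigl(j\|v\|_{1;\Omega-\overline{\Omega_j}}+|Dv|(\Omega-\overline{\Omega_j})\bigr)$, whose total-variation coefficient is uniform in $j$ (a standard slicing fact on Lipschitz domains), combined with the Leibniz bookkeeping $\nabla^m(\eta_l u)=\eta_l\nabla^m u+G_l$, $\sum_l G_l=0$, and the vanishing of $|\nabla^m u|(\Omega-\overline{\Omega_{j-1}})$ as $j\to\infty$. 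What each approach buys: the paper's citation is shorter, but continuity of $S$ under strict convergence is sequential continuity of a map on a non-linear topology, not a norm bound, so feeding it into the ``sum a geometric series against the trace constants $c_j$'' argument of Proposition~\ref{Pr1} requires an extra diagonal step and the choice of collars with $|\nabla^m u|(\partial\Omega_j)=0$, which the paper leaves implicit; your version supplies exactly the quantitative substitute that makes the argument self-contained, at the cost of the refined trace inequality (which you should either prove by the flattening-and-slicing argument you sketch or attribute, e.g.\ to the proof of the trace theorem in \cite{Giu} or \cite{AFP}). Two minor points: your range $0\le k\le m-2$ for the $T$-traces is the intended reading of the statement (the ``$k<m-2$'' there is evidently a slip), and your alternative of choosing collar widths $t_j\downarrow 0$ adapted to $u$ so that $c_j'\,|\nabla^m u|(\{\operatorname{dist}(\cdot,\partial\Omega)<t_{j-1}\})\to 0$ is indeed an equivalent reformulation of the same fix.
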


\begin{proof}
The result follows by the same arguments we used in the proof of Proposition \ref{Pr1} since by Theorem 3, page 483 in \cite{GMS}, $S$ is continuous with respect to the metric $d_f(.,.)$ (see also \cite{Giu}, Theorem 2.11 and Remark 2.12 as well as \cite{DT}, Theorem 2.3).
\end{proof}

\begin{Cor}
 Let $\Omega\subset\R^n$ be open and bounded with $C^1$-boundary and $u\in BV^m(\Omega)\cap L^q(\Omega)$. Then there is a function $\tilde{u}\in BV^m(\R^n)\cap L^q(\R^n)$ such that $u=\tilde{u}$ a.e. on $\Omega$ and 
\[
 |\nabla^m\tilde{u}|(\partial\Omega)=0.
\]
\end{Cor}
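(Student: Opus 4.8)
The plan is to construct $\tilde u$ by first producing, via Proposition \ref{Pr1BV}, a sequence of smooth functions $\varphi_k\in C^\infty(\Omega)\cap BV^m(\Omega)$ approximating $u$ in the sense of $(\mathcal A_{1/k})$ and whose traces up to order $m-1$ match those of $u$ on $\partial\Omega$ (in the appropriate $T$/$S$ sense). One would then extend each $\varphi_k$ across $\partial\Omega$ using the matching-trace property: since $\varphi_k$ is smooth up to the boundary from inside and its derivatives glue correctly, one can extend it to a function on $\R^n$ which is $W^{m-1,1}$ across $\partial\Omega$ and whose $m$-th derivative measure puts no mass on $\partial\Omega$ (the jump of $\nabla^{m-1}\varphi_k$ across $\partial\Omega$ vanishes because the traces agree). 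Concretely, one would take the Stein-type extension $\mathfrak E$ of Proposition \ref{Ext} applied suitably, or simply extend $\varphi_k$ by reflection / by the same construction used for $u'$, obtaining $\tilde\varphi_k\in BV^m(\R^n)\cap L^q(\R^n)$ with $|\nabla^m\tilde\varphi_k|(\partial\Omega)=0$ and $\tilde\varphi_k=\varphi_k$ on $\Omega$.

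Next I would pass to the limit. The $\tilde\varphi_k$ converge to some $\tilde u$ in $W^{m-1,1}(\R^n)\cap L^q(\R^n)$ once we check the extension operator is bounded in these norms, and the restrictions to $\Omega$ converge to $u$; simultaneously the extensions restricted to $\R^n\setminus\overline\Omega$ converge in $W^{m-1,1}$ and the mass of $\nabla^m\tilde\varphi_k$ outside $\overline\Omega$ stays controlled by $d_f(u,\varphi_k)$-type quantities plus a fixed multiple of $|\nabla^m u|(\Omega)$ coming from the continuity of the extension. Lower semicontinuity of total variation gives $\tilde u\in BV^m(\R^n)\cap L^q(\R^n)$. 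The key point to extract is that $|\nabla^m\tilde u|(\partial\Omega)=0$: since each $\tilde\varphi_k$ has no $m$-th order mass on $\partial\Omega$ and $\nabla^{m-1}\tilde\varphi_k\to\nabla^{m-1}\tilde u$ in $L^1$, while $|\nabla^m\tilde\varphi_k|(\R^n)=|\nabla^m\varphi_k|(\Omega)+|\nabla^m\varphi_k|(\R^n\setminus\overline\Omega)\to|\nabla^m u|(\Omega)+|\nabla^m\tilde u|(\R^n\setminus\overline\Omega)$, one sees that no mass can concentrate on the $(n-1)$-dimensional set $\partial\Omega$ in the limit — a standard argument comparing $|\nabla^m\tilde u|(\R^n)$, which by lower semicontinuity is at most $\liminf|\nabla^m\tilde\varphi_k|(\R^n)$, with the sum of its values on $\Omega$ and on $\R^n\setminus\overline\Omega$ separately forces the boundary mass to vanish.

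An alternative, perhaps cleaner, route is to argue directly: take any extension $\hat u\in BV^m(\R^n)\cap L^q(\R^n)$ of $u$ (which exists because the $C^1$-boundary case falls under the minimally smooth hypothesis, applying $\mathfrak E$ componentwise to $\nabla^{m-1}u$ and checking that this produces a $BV^m$-extension), then correct it so that $\nabla^{m-1}\hat u$ has no jump across $\partial\Omega$. The jump of $\nabla^{m-1}\hat u$ on $\partial\Omega$ is governed by the difference of the interior and exterior traces $S\nabla^{m-1}(u) - S\nabla^{m-1}(\hat u|_{\R^n\setminus\overline\Omega})$; by modifying $\hat u$ on a neighbourhood of $\partial\Omega$ — replacing it there by a $BV^m$-function whose interior trace (from outside) agrees with $S\nabla^{m-1}u$ — one kills this singular contribution. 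Here one uses precisely the trace-matching technology of Proposition \ref{Pr1BV} applied on the annular region, together with the fact that $\mathcal H^{n-1}(\partial\Omega)<\infty$ and $\nabla^{m-2}$-derivatives are already continuous across $\partial\Omega$ (they lie in $W^{1,1}$), so the only possible $\partial\Omega$-concentrated part of $\nabla^m\tilde u$ is the jump term $(\nabla^{m-1}\tilde u^+ - \nabla^{m-1}\tilde u^-)\otimes\nu\,\mathcal H^{n-1}\mres\partial\Omega$, and making the traces agree forces this to be zero.

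The main obstacle I anticipate is controlling the $m$-th order derivative measure of the extension near $\partial\Omega$ and verifying that the Stein-type extension of Proposition \ref{Ext}, which is stated there only for $W^{m,p}$, really does yield a $BV^m(\R^n)$-function when applied to a $BV^m(\Omega)$-function rather than a Sobolev function — one must argue via the smooth approximants $\varphi_k$ and a limiting/lower-semicontinuity argument, keeping the boundary traces matched at every stage so that the limit extension carries no $m$-th order mass on $\partial\Omega$. The $L^q$-part is routine: the extension operator is built from mollification and reflection-type operations that respect $L^q$-bounds, exactly as in Proposition \ref{Ext}, so $\tilde u\in L^q(\R^n)$ comes for free once the $BV^m$-extension is in hand.
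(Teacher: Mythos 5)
You have the right key idea --- match the boundary traces of $\nabla^k$ up to order $m-1$ so that the only candidate for an $m$-th order singular part concentrated on $\partial\Omega$, namely the jump term of $\nabla^{m-1}$, vanishes --- but both of your proposed executions contain genuine gaps, and both miss the simplification that makes the paper's proof a three-line argument. The paper takes a \emph{single} function $\varphi\in C^\infty(\Omega)\cap BV^m(\Omega)\cap L^q(\Omega)$ from Propositions \ref{Pr1BV} and \ref{BVmq} with $T\nabla^k\varphi=T\nabla^k u$ for $0\le k\le m-2$ and $S\nabla^{m-1}\varphi=S\nabla^{m-1}u$; since this $\varphi$ lies in $W^{m,1}(\Omega)\cap L^q(\Omega)$, Proposition \ref{Ext} applies to it directly (the question of extending a mere $BV^m$-function never arises), and one sets $\tilde u:=u$ on $\Omega$ and $\tilde u:=\mathfrak{E}\varphi$ on $\R^n-\Omega$. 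The trace matching then gives $\tilde u\in BV^m(\R^n)\cap L^q(\R^n)$ with $|\nabla^m\tilde u|(\partial\Omega)=0$ at once. Note that the corollary asks only for \emph{some} extension of $u$; no approximation quality of $\tilde u$ outside $\Omega$ is required, so there is nothing to pass to the limit in.

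Your first route fails at the limiting step. You extend each $\varphi_k$ in its entirety and let $\tilde u$ be the limit of $\mathfrak{E}\varphi_k$; but $\varphi_k$ is Cauchy only in $W^{m-1,1}(\Omega)$, not in $W^{m,1}(\Omega)$, so $\nabla^m\mathfrak{E}\varphi_k$ converges only weakly-$*$ on $\R^n-\overline{\Omega}$ and you do not get convergence of $|\nabla^m\mathfrak{E}\varphi_k|(\R^n-\overline{\Omega})$ to $|\nabla^m\tilde u|(\R^n-\overline{\Omega})$, only the lower semicontinuity inequality. Your mass comparison therefore yields only $|\nabla^m\tilde u|(\partial\Omega)\le\liminf_k|\nabla^m\mathfrak{E}\varphi_k|(\R^n-\overline{\Omega})-|\nabla^m\tilde u|(\R^n-\overline{\Omega})$, whose right-hand side need not vanish; mass from outside can perfectly well concentrate on the $(n-1)$-dimensional set $\partial\Omega$ in a weak-$*$ limit (this is exactly how jump sets arise), so ``no mass can concentrate'' is not a standard argument but the very thing to be proved. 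Your second route starts from an extension $\hat u\in BV^m(\R^n)$ of $u$ obtained by applying $\mathfrak{E}$ componentwise to $\nabla^{m-1}u$: extending the tensor field $\nabla^{m-1}u$ does not produce a field that is the $(m-1)$-st gradient of any extension of $u$, and whether $\mathfrak{E}u$ itself lands in $BV^m(\R^n)$ is precisely the point you concede is unproven. Both difficulties evaporate once you extend the trace-matching smooth approximant instead of $u$ and keep $u$ itself on $\Omega$.
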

\begin{proof}
 According to Propositions \ref{Pr1BV} and \ref{BVmq} above, we can choose a function $\varphi\in C^\infty(\Omega)\cap BV^m(\Omega)\cap L^q(\Omega)$ with $T\nabla^{k}\varphi=T\nabla^ku$ for $0\leq k\leq m-2$ and $S\nabla^{m-1}\varphi=S\nabla^{m-1}u$ in $L^1(\partial \Omega,\mathcal{H}^{n-1})$. In particular, $\varphi\in W^{m,1}(\Omega)\cap L^q(\Omega)$ and we can  therefore apply Proposition \ref{Ext} to extend $\varphi$ to a function $\tilde{\varphi}\in W^{m,1}(\R^n)\cap L^q(\R^n)$. But then
\begin{equation*}
\tilde{u}(x):=
 \begin{cases}
  u(x),\quad &x\in\Omega,\\
  \tilde{\varphi}(x),\quad &x\in\R^n-\Omega
 \end{cases}
\end{equation*}
is an extension of $u$ as claimed.
\end{proof}

\noindent With these results at hand, there now follows the \textit{proof of Theorem \ref{Res2}}:

 Without loss of generality, we may assume $x_0=0$.\\
 By Proposition \ref{BVmq} we can construct a smooth function $\psi\in C^\infty(\Omega-\overline{D})$ having the same traces as $u$ on $\partial D$ at any order and with
\begin{equation}
d_f(u_{|\Omega-D},\psi)+\|u-\psi\|_{q;\Omega-D}<\eps/3. \tag{1}
\end{equation}
In particular, $\psi$ is in $W^{m,1}(\Omega-\overline{D})\cap L^q(\Omega-D)$ and by Proposition \ref{Ext}, we can extend $\psi$ outside of $\Omega$ to a function $\psi'\in W^{m,1}(\R^n-\overline{D})\cap L^q(\R^n-D)$. 
Due to Proposition \ref{Pr1BV}, the function $\psi'$ can be extended by $u_{|D}$ to a function $u'$ in $BV^m(\R^n)\cap L^q(\R^n-D)$ s.t.
\begin{equation}
|\nabla^mu'|(\partial D)=|\nabla^mu|(\partial D) \tag{2}
\end{equation}
and since $|\nabla^mu|(\mathcal{N})=\sqrt{1+|\nabla^mu|^2}(\mathcal{N})$ for any $\mathcal{L}^n$-null set $\mathcal{N}$ we also get
\begin{equation}
\sqrt{1+|\nabla^mu|^2}(\partial D)=\sqrt{1+|\nabla^mu'|^2}(\partial D). \tag{3}
\end{equation}
Altogether, (1)-(3) imply that $u'$ approximates $u$ in the sense that
\[
d_f(u,u')+\|u-u'\|_{q;\Omega-D}<\eps/3.
\]
 Now we consider $u'_\alpha(x):=u'(\alpha x)$ for $\alpha>1$. Then, by the star shape of $D$, $u'_\alpha$ is $q$-integrable outside of $D_\alpha:=(1/\alpha)D\Subset D$. \\
It obliges to show $u'_\alpha\rightarrow u'$ in the sense of $(\mathcal{A}_\eps)$ for $\alpha\downarrow 1$.\\
With $h:\R^n\rightarrow\R^n$, $x\mapsto (1/\alpha)\,x$, we have $\nabla^m(u'_\alpha)=\alpha^{m-n}h_*\nabla^mu$, where $h_*\mu(B):=\mu(h^{-1}(B))$ denotes the image measure.\\ 
Further we get:
\begin{align*}
 |\nabla^m u'_\alpha|(\Omega)=\sup\Biggl\{\intop_{\Omega}u'(\alpha x)\bigg( \sum_{|\nu|=m}\partial^\nu g_\nu(x)\bigg) dx\;:\;g\in C^m_0(\Omega,\R^M), \|g\|_\infty\leq 1\Biggr\}\\
=\alpha^{-n}\sup\Biggl\{\intop_{\alpha\Omega}u'(x)\bigg( \sum_{|\nu|=m}\partial^\nu g_\nu\bigg)(x/\alpha) dx\;:\;g\in C^m_0(\Omega,\R^M), \|g\|_\infty\leq 1\Biggr\}\\
=\alpha^{m-n}\sup\Biggl\{\intop_{\alpha\Omega}u'(x)\bigg( \sum_{|\nu|=m}\partial^\nu g_\nu(x/\alpha)\bigg)dx\;:\;g\in C^m_0(\Omega,\R^M), \|g\|_\infty\leq 1\Biggr\}\\
 = \alpha^{m-n}|\nabla^m u'|(\alpha\Omega)\xrightarrow{\alpha\downarrow 1}|\nabla^m u'|(\overline{\Omega})=|\nabla^m u'|(\Omega),
\end{align*}
since $u'\in W^{m,1}(\R^n-\overline{D})$ and therefore $|\nabla^m u'|(\partial\Omega)=0$. This proves
\[
 \underset{\alpha\downarrow 1}{\limsup\,}|\nabla^m u'_\alpha|(\Omega)\leq|\nabla^m u'|(\Omega)
\]
and convergence follows from $\nabla^{m-1}u'_\alpha\xrightarrow{\alpha\downarrow 1}\nabla^{m-1}u'$ in $L^1(\R^n)$ and lower semi-continuity of the total variation.\\
Moreover, if 
\[
\nabla^mu'=\nabla^m_au'\mathcal{L}^n+\nabla^m_su' 
\]
denotes the Lebesgue-decomposition of the tensor valued Radon measure $\nabla^m u'$, we have that
\[
 \alpha^{m-n}h_*\nabla^mu'=\alpha^m\nabla^m_au'\circ h^{-1}\mathcal{L}^n+\alpha^{m-n}h_*\nabla^m_su'
\]
is the Lebesgue-decomposition of $\nabla^mu'_\alpha$, and by definition it follows:
\begin{align*}
 \sqrt{1+|\nabla^mu'_\alpha|^2}(\Omega)=\intop_{\Omega}\sqrt{1+|\alpha^m\nabla^m_au'(\alpha x)|^2}dx+\alpha^{m-n}|h_*\nabla^m_su'|(\Omega).
\end{align*}
As above, for the total variation of the singular part we have
\[
\alpha^{m-n}|h_*\nabla^m_su'|(\Omega)=\alpha^{m-n}|\nabla^m_su'|(\alpha\Omega)\xrightarrow{\alpha\downarrow 1}|\nabla^m_su'|(\overline{\Omega})=|\nabla^m_su'|(\Omega).
\]
To the first part, we can apply the transformation formula:
\[
 \intop_{\Omega}\sqrt{1+|\alpha^m\nabla^m_au'(\alpha x)|^2}dx=\alpha^{-n} \intop_{\alpha\Omega}\sqrt{1+|\alpha^m\nabla^m_au'(x)|^2}dx.
\]
Due to $\alpha^m\nabla^m_au'\xrightarrow{\alpha\downarrow 1} \nabla^m_au'$ pointwise a.e. and $|\alpha^m\nabla^m_au'(x)|\leq 2|\nabla^m_au'(x)|$ (we may assume  $\alpha^m<2$), by Lebesgue's theorem on dominated convergence we conclude
\[
\alpha^{-n} \intop_{\alpha\Omega}\sqrt{1+|\alpha^m\nabla^m_au'(x)|^2}dx\xrightarrow{\alpha\downarrow 1}  \intop_{\Omega}\sqrt{1+|\nabla^m_au'(x)|^2}dx.
\]
Hence, we can choose $\alpha>1$ small enough with  
\begin{equation}
d_f(u',u'_\alpha)+\|u'-u'_\alpha\|_{q,\Omega-D}<\eps/3 \tag{4}
\end{equation}
and $u'_\alpha$ is $q$-integrable outside $D_\alpha$. From that point on, we may proceed just like in the proof of Lemma \ref{Wkp} and construct a smooth function $\varphi\in C^\infty(\overline{\Omega})$ with
\begin{equation}
 \|u'_\alpha-\varphi\|_{q;\Omega-D}+d_f(u'_\alpha,\varphi)<\eps/3 \tag{5}.
\end{equation}
by conjoining $C^\infty$-approximations of $u'_\alpha$ on $\R^n-\overline{D_\alpha}$ and $D$.
Altogether, we have that $\varphi$ approximates $u$ as claimed.
\qed

\begin{Rem}
 One might expect that, using similar arguments as in the proof of Theorem \ref{Res1}, we can generalize the above result towards weaker assumptions on $\Omega$ and $D$; but this is not the case. This seems to ground on the fact that the metric $d_f(.,.)$ is not translation invariant, and addition does  not act  continuously w.r.t. the topology it induces on $BV^m(\Omega)$. Put simply: minor changes of a function $u\in BV^m(\Omega)$ on a small set can have a major effect on its global behavior.
\end{Rem}

\begin{section}{Boundary traces of $W^{m,p}(\Omega)\cap L^q(\Omega)$-functions, proof of Theorem \ref{Res3}}
 Revising the steps in the proof of our approximation result in the Sobolev context, we find that our method largely relies on the extension result \ref{Ext}, being the reason for that we have to presume  $D$ to be compactly contained in $\Omega$ which guarantees $q$-integrability near the boundary $\partial\Omega$. Hence we could prove Theorem \ref{Res1} in a much broader setting if already any $W^{m,p}(\Omega)$-function could be extended from $\Omega$ to $\R^n$ by a $W^{m,p}(\R^n)\cap L^q(\R^n)$-function. This leads to the question, whether the images of $W^{m,p}(\Omega)$ and $W^{m,p}(\Omega)\cap L^q(\Omega)$ under the Sobolev trace map $T:W^{1,p}(\Omega)\rightarrow L^p(\partial\Omega)$ coincide for $q>\frac{np}{n-mp}$. This section is devoted to the prove of Theorem \ref{Res3}, which gives a negative answer if $p>1$ or $m>1$.

In what follows, let $\Omega$ be the 'upper' half-space $\R^{n-1}\times (0,\infty)$ in $\R^n$ and $T:W^{1,p}(\Omega)\rightarrow L^p(\partial\Omega)=L^p(\R^{n-1})$ denote the trace map for Sobolev functions. A classical result by Gagliardo in \cite{Ga} is, that only for $p=1$ this map is onto. For $p>1$, the investigation of the image of $T$ in $L^p(\R^{n-1})$ led to the idea of fractional Sobolev spaces (often referred to as Sobolev-Slobodeckij spaces) $W^{s,p}(\Omega)$ for arbitrary non-integer $s>0$. With these at hand, the exact trace of $W^{1,p}(\Omega)$ is given by $W^{1-\nicefrac{1}{p},p}(\R^{n-1})$. 

\noindent\textit{Proof of Theorem \ref{Res3}:}

\noindent\textit{ad (i):}\quad Let $f\in L^1(\R^{n-1})$ be an arbitrary function on the boundary of $\Omega$. For a given $q\geq 1$, we are going to construct a function $u\in W^{1,1}(\Omega)\cap L^q(\Omega)$ with $T(u)=f$:

Let $(\varphi_k)_{k=1}^\infty\subset C^\infty_0(\R^{n-1})$ be a sequence of smooth functions with compact support, which approximates $f$ in the following way:
\begin{align}
&\|f-\varphi_k\|_{1;\R^{n-1}}\xrightarrow{k\rightarrow\infty}0 \tag{1}\\
&\|\varphi_{k+1}-\varphi_{k}\|_{1,\R^{n-1}}<2^{-k}. \tag{2}
\end{align}
Since $C^\infty_0(\R^{n-1})\subset L^q(\R^{n-1})$, we can further choose a monotonously decreasing null-sequence $(\delta_k)_{k=1}^\infty$ such that
\begin{align}
 &\delta_k\|\varphi_k\|_{q;\R^{n-1}}<2^{-k},\tag{3}\\
 &\delta_k\|\nabla\varphi_k\|_{1;\R^{n-1}}<2^{-k}\quad\text{ for }k=1,2,...\tag{4},\\
 &\eps_0:=\sum_{k=1}^\infty \delta_k<\infty \tag{5}.
\end{align}

Setting $\eps_k:=\sum_{i=k+1}^\infty \delta_i$ for $k=1,2,...$, we now define a function $\tilde{u}(x,t)$ at a point $(x,t)\in \R^{n-1}\times (0,\infty)$ piecewise by
\begin{align*}
 \tilde{u}(x,t):=\begin{cases}
          0,\quad&\text{ for }\eps_0\leq t,\\
          \varphi_1(x),\quad&\text{ for }\eps_1\leq t<\eps_0,\\
           \hspace{0.5cm}\vdots\\
           \varphi_k(x),\quad&\text{ for }\eps_{k}\leq t<\eps_{k-1},\\
           \hspace{0.5cm}\vdots
     \end{cases}
\end{align*}

It is readily seen from (3), that $\tilde{u}\in L^q(\Omega)$. Furthermore we claim  $\tilde{u}\in BV(\Omega)$: Since $\tilde{u}$ has only jump-type discontinuities concentrated on the set 
\[
S_{\tilde{u}}=\bigcup_{i=0}^\infty \left(\R^{n-1}\times \{\eps_i\}\right),
\]
which is countably $(n-1)$-rectifiable and $\tilde{u}$ is differentiable outside $S_{\tilde{u}}$ with
\begin{align*}
 \nabla\tilde{u}(x,t):=\begin{cases}
          0,\quad&\text{ for }\eps_0< t,\\
          \nabla\varphi_1(x)\oplus 0,\quad&\text{ for }\eps_1\leq t<\eps_0,\\
           \hspace{1cm}\vdots\\
           \nabla\varphi_k(x)\oplus0,\quad&\text{ for }\eps_{k}\leq t<\eps_{k-1},\\
           \hspace{1cm}\vdots
     \end{cases}
\end{align*}
the total variation of $\tilde{u}$ can be calculated to
\[
|\nabla\tilde{u}|(\Omega)=\sum_{k=1}^\infty\delta_k\|\nabla \varphi_k\|_{1,\R^{n-1}}+\|\varphi_1\|_{1;\R^{n-1}}+\sum_{k=1}^\infty\|\varphi_{k+1}-\varphi_k\|_{1;\R^{n-1}}
\]
which is finite by (2)+(4).\\
Thus, by the properties of mollification
\[
 u(x,t):=\big(\rho_{\nicefrac{t}{2}}\ast \tilde{u}\big)(x,t)
\]
defines a $W^{1,1}(\Omega)\cap L^q(\Omega)$-function which has boundary trace $f$ on $\R^{n-1}$ by construction.

\noindent\textit{ad (ii):}\quad In order to prove the non-surjectivity in the case $p>1$, we make use of the following generalization  of the classical Gagliardo-Nirenberg inequality towards fractional Sobolev spaces (see \cite{BM}, Corollary 2):

\begin{Lem}
Let $1<p,q<\infty$, $0< s< 1$ and $u\in W^{1,p}(\Omega)\cap L^q(\Omega)$. Then there is a constant $C=C(p,q,s)>0$ such that 
\[
\|u\|_{s,p(s);\Omega}\leq C \|u\|_{q;\Omega}^{1-s}\|u\|_{1,p;\Omega}^s
\]
where
\begin{equation*}
\frac{1}{p(s)}=\frac{s}{p}+\frac{1-s}{q}.\label{gni}
\end{equation*}
\end{Lem}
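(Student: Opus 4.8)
The statement is precisely Corollary~2 of \cite{BM}, specialised to the endpoint data $s_1=1$, $p_1=p$, $s_2=0$, $p_2=q$ and interpolation parameter $\theta=s$: the relations $s=s\cdot 1+(1-s)\cdot 0$ and $\tfrac1{p(s)}=\tfrac sp+\tfrac{1-s}q$ are exactly the compatibility conditions required there, and $W^{0,q}(\Omega)=L^q(\Omega)$. So strictly speaking nothing has to be proved; nevertheless, if one wishes to argue from scratch, the plan is the following.

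First I would dispose of the Lebesgue-norm part of $\|u\|_{s,p(s);\Omega}$, which is elementary: the identity $\tfrac1{p(s)}=\tfrac sp+\tfrac{1-s}q$ exhibits $L^{p(s)}(\Omega)$ as an intermediate space between $L^p(\Omega)$ and $L^q(\Omega)$, so interpolation of Lebesgue norms (i.e.\ H\"older's inequality) gives $\|u\|_{p(s);\Omega}\le\|u\|_{p;\Omega}^{\,s}\|u\|_{q;\Omega}^{\,1-s}\le\|u\|_{1,p;\Omega}^{\,s}\|u\|_{q;\Omega}^{\,1-s}$. It then remains to bound the Gagliardo seminorm $[u]_{s,p(s);\Omega}$. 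For this I would first extend $u$ to all of $\R^n$ — by even reflection across $\partial\Omega$, or by Proposition~\ref{Ext} — producing $\tilde u\in W^{1,p}(\R^n)\cap L^q(\R^n)$ with $\|\tilde u\|_{1,p;\R^n}\lesssim\|u\|_{1,p;\Omega}$, $\|\tilde u\|_{q;\R^n}\lesssim\|u\|_{q;\Omega}$ and $[u]_{s,p(s);\Omega}\le[\tilde u]_{s,p(s);\R^n}$, which reduces matters to the full space. Writing the seminorm in translation form,
\[
[\tilde u]_{s,p(s);\R^n}^{\,p(s)}=\int_{\R^n}|h|^{-n-sp(s)}\,\big\|\tilde u(\cdot+h)-\tilde u\big\|_{p(s);\R^n}^{\,p(s)}\,dh ,
\]
I would then play the two endpoint bounds $\|\tilde u(\cdot+h)-\tilde u\|_{p;\R^n}\le|h|\,\|\nabla\tilde u\|_{p;\R^n}$ (for small $h$) and $\|\tilde u(\cdot+h)-\tilde u\|_{q;\R^n}\le 2\|\tilde u\|_{q;\R^n}$ (for large $h$) off against each other by means of a dyadic (Littlewood--Paley) decomposition $\tilde u=\sum_j\Delta_j\tilde u$, using that each of the three norms involved is characterised by its frequency-localised pieces, thereby reducing the claim to a short interpolation inequality for the associated sequence spaces.

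The hard part is exactly this last reduction. Because here \emph{both} the order of differentiability (from $1$ down to $0$) \emph{and} the exponent of integrability (from $p$ to $q$) change simultaneously, a single-scale cut-off of the $h$-integral does not close — that device succeeds only when the integrability exponent is held fixed, as in the classical embedding $W^{1,p}\hookrightarrow W^{s,p}$. The genuine content, carried out in \cite{BM}, consists in organising the frequency decomposition so that the H\"older interpolation performed at each dyadic scale adds up correctly. Since all of this is already available in the literature, in the present paper we simply invoke \cite{BM}, Corollary~2.
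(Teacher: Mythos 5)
Your proposal is correct and matches the paper exactly: the paper gives no proof of this Lemma either, but simply invokes \cite{BM}, Corollary~2, which is precisely the specialisation you identify (interpolating between $W^{1,p}(\Omega)$ and $W^{0,q}(\Omega)=L^q(\Omega)$ with parameter $\theta=s$). Your additional sketch of how one would prove it from scratch is sound but not part of the paper's argument.
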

We are going to show, that whenever $q>\frac{np}{n-p}$, we can choose $0<s_0<1$ s.t. $p(s_0)>\frac{(n-1)p}{n-p}$ and $s_0>\frac{1}{p(s_0)}$. Then by the above Lemma, every function $u\in W^{1,p}(\Omega)\cap L^q(\Omega)$ is an element of $W^{s_0,p(s_0)}(\Omega)$ and consequently, by \cite{Tr}, Theorem 2.7.2 it has a boundary trace in $L^{p(s_0)}(\R^{n-1})$. Notice, that due to the fact that the trace operator is in any case defined through the continuation of the trivial map $u\mapsto u_{|\partial\Omega}$ on the dense subspace $C^\infty(\overline{\Omega})\cap W^{1,p}(\Omega)$ (and $C^\infty(\overline{\Omega})\cap W^{s_0,p(s_0)}(\Omega)$, respectively), in $L^p(\partial\Omega)$ the trace of $u$ as a $W^{1,p}(\Omega)\cap L^q(\Omega)$-function will be the same as the trace of $u$ as an $W^{s_0,p(s_0)}(\Omega)$-function, since by the above inequality every sequence of smooth functions approximating $u$ in $W^{1,p}(\Omega)\cap L^q(\Omega)$ approximates $u$ as an element of $W^{s_0,p(s_0)}(\Omega)$ as well.  But since $p(s_0)$ exceeds the maximal exponent from  Sobolev's embedding theorem for traces (see \cite{Ad}, Theorem 5.4 Case A) which is proven to be optimal via a counterexample in \cite{Ad}, example 5.25, we conclude that there are indeed traces in $T(W^{1,p}(\Omega))$ which do not come from a $W^{1,p}(\Omega)\cap L^q(\Omega)$-function.

Solving 
\[
 \frac{1}{p(s)}=\frac{s}{p}+\frac{1-s}{q}<s
\]
for $s$ yields
\[
s>\frac{p}{pq+p-q}.
\]
Surely, $p(s)$ becomes maximal for $s$ minimal, so we put
\[
s_{min}:=\frac{p}{pq-q+p}<1
\]
and calculate $p_{max}:=p(s_{min})$ to
\[
p_{max}=q\left(1-\frac{1}{p}\right)+1.
\]
Since $q>\frac{np}{n-p}$ by assumption, we have
\[
p_{max}>\frac{(n-1)p}{n-p}.
\]
Hence we can choose $s_{min}<s_0<1$ small enough s.t. $\frac{(n-1)p}{n-p}<p(s_0)<p_{max}$.

\begin{Rem}
From part $(ii)$ of the theorem we conclude, that for $m>1$ the image  $T(W^{m,p}(\Omega)\cap L^q(\Omega))$ in $L^p(\partial\Omega)$ is always a proper subspace of $T(W^{m,p}(\Omega))$, since $W^{m,p}(\Omega)$ is embedded into some $W^{1,p'}(\Omega)$ for $p'>1$ via  Sobolev's embedding theorem. 
\end{Rem}

\end{section}


\begin{thebibliography}{}
%
%
\bibitem{Ad} R.A. Adams, {\em Sobolev spaces}, Academic Press, New York-London, Pure and Applied Mathematics, Vol. 65 (1975).

\bibitem{AFP}  L. Ambrosio, N. Fusco, D. Pallara, {\em Functions of bounded variation and free discontinuity problems}, Clarendon Press, Oxford (2000).

\bibitem{BKP} K. Bredies, K. Kunisch, T. Pock, {\em Total generalized variation}, SIAM J. Imaging Sciences, Vol. 3, No. 3, pp. 492-526 (2010).

\bibitem{BM} H. Brezis, P. Mironescu, {\em Gagliardo-Nirenberg, composition and products in fractional Sobolev spaces. Dedicated to the memory of Tosio Kato.}, J. Evol. Equ. 1, no. 4, p. 387–404  (2001).

\bibitem{DT} F. Demengel, R. Temam, \emph{Convex functions of a measure}, Indiana University Mathematics Journal, Vol. 33, No. 5, pp. 673-709 (1984).

\bibitem{FT} M. Fuchs, C. Tietz, {\em Existence of generalized minimizers and of dual solutions for a class of variational problems with linear growth related to image recovery}, Journal of Mathematical Sciences: Vol. 210, Issue 4 , pp 458-475 (2015).

\bibitem{Ga}    E. Gagliardo.  Caratterizzazioni  delle tracce sulla frontiera relative ad alcune classi di funzioni in n variabili. Rend. Sem. Mat. Univ. Padova, 27:284–305, (1957).

\bibitem{GMS} M. Giaquinta, G. Modica, J. Sou\v{c}ek, {\em Cartesian currents in the calculus of variations I},  Ergebnisse der Mathematik und ihrer Grenzgebiete, 3. Folge. A Series of Modern Surveys in Mathematics, Vol. 37, Springer, Berlin-Heidelberg (1998).

\bibitem{Giu} E. Giusti, {\em Minimal surfaces and functions of bounded variation}, Monographs in Mathematics, Vol. 80, Birkh\"auser Verlag, Basel (1984). 

\bibitem{MS} N. Meyers, J. Serrin, {\em H=W}, Proc. Nat. Acad. Sci. U.S.A. 51, p. 1055–1056 (1964).

\bibitem{St} E.M. Stein, {\em Singular integrals and differentiability properties of functions}, Princeton University Press, Princeton, New Jersey (1970).

\bibitem{Tr} H. Triebel, {\em Theory of function spaces}, Monographs in Mathematics Vol. 78, Basel, Boston, Stuttgart, Birkh\"auser, (1983)

\end{thebibliography}


\end{document}